\newcommand{\Q}{\mathbb{Q}}
\newcommand{\R}{\mathbb{R}}
\newcommand{\Z}{\mathbb{Z}}
\newcommand{\ZZ}{\mathbb{Z}}
\newcommand{\E}{\mathbf{E}}
\renewcommand{\P}{\mathbf{P}}
\newcommand{\PP}{\mathbb{P}}
\newcommand{\la}{\langle}
\newcommand{\ra}{\rangle}
\newcommand{\td}{\widetilde}
\newcommand{\avg}{\operatorname{avg}}
\DeclareMathOperator{\maxdeg}{maxdeg}
\DeclareMathOperator{\Tor}{Tor}
\DeclareMathOperator{\Var}{Var}
\theoremstyle{plain}
	\newtheorem{theorem}{Theorem}[section]
	\newtheorem{lemma}[theorem]{Lemma}
    \newtheorem{corollary}[theorem]{Corollary}
    \newtheorem{proposition}[theorem]{Proposition}
    \newtheorem{conjecture}[theorem]{Conjecture}
    \newtheorem{fact}[theorem]{Fact}
    \newtheorem{question}[theorem]{Question}
\theoremstyle{definition}
    \newtheorem{defn}[theorem]{Definition}
    \newtheorem{notation}[theorem]{Notation}
\theoremstyle{remark}
	\newtheorem{remark}[theorem]{Remark}
\newcommand{\ds}{\displaystyle}
\title{Characteristic dependence of syzygies of random monomial ideals}
\author{Caitlyn Booms, Daniel Erman, and Jay Yang}
\thanks{The authors were supported by NSF GRFP grant DGE-1747503 and by NSF grants DMS-1502553, DMS-1902123, and DMS-1745638. Support was also provided by the Graduate School and OVCRGE at UW-Madison with funding from the Wisconsin Alumni Research Foundation.}
\begin{document}

\maketitle

\begin{abstract}
When do syzygies depend on the characteristic of the field? Even for well-studied families of examples, very little is known. For a family of random monomial ideals, namely the Stanley--Reisner ideals of random flag complexes, we prove that the Betti numbers asymptotically almost always depend on the characteristic. Using this result, we also develop a heuristic for characteristic dependence of asymptotic syzygies of algebraic varieties.
\end{abstract}

\section{Introduction}
The minimal free resolution of an ideal can depend on the characteristic of the ground field.  Known examples include certain monomial ideals~\cite{dalili-kummuni, katzman}, Veronese embeddings of $\PP^r$~\cite{anderson, jonsson-experimental}, and determinantal ideals~\cite{hashimoto}.  
This paper is motivated by a desire to understand if dependence on the characteristic is a common or rare phenomenon. To make such a question precise, we can restrict to specific families, such as:
\begin{question}\label{q:ver}
For which $d\geq 1$ does the minimal free resolution of the $d$-uple embedding of $\PP^r$ depend on the characteristic?  Does it happen for all $d\gg 0$?  Or does it happen rarely?
\end{question}
\begin{question}\label{q:rm}
Let $\Delta\sim \Delta(n,p)$ be a random flag complex (see \S\ref{subsec:random flag}).  As $n\to \infty$, what is the probability that the minimal free resolution of the Stanley--Reisner ideal of $\Delta$ depends on the characteristic?
\end{question}

We do not offer new results on Question~\ref{q:ver}, though we discuss in \S\ref{subsec:asymptotic} how questions like this motivated our work. Our main result is Theorem~\ref{thm:m torsion}, which answers Question~\ref{q:rm} and shows that in this context, dependence on the characteristic is quite common.

To analyze dependence on characteristic, we will say that the Betti table of the Stanley--Reisner ideal of $\Delta$ \textbf{has $\ell$-torsion} if this Betti table is different when defined over a field of characteristic $\ell$ than it is over $\mathbb Q$.
See \S\ref{sec:background} for further details on notation.  We prove:

\begin{theorem}\label{thm:m torsion}
Let $\Delta\sim \Delta(n,p)$ be a random flag complex with $n^{-1/6} \ll p \leq 1-\epsilon$ for $\epsilon>0$. 
\begin{enumerate}
  \item  With high probability as $n\to \infty$, the Betti table of the Stanley--Reisner ideal of $\Delta$ depends on the characteristic.
    \item  More specifically, if we fix any $m\geq 2$, then with high probability as $n\to \infty$, the Betti table of the Stanley--Reisner ideal of $\Delta$ has $\ell$-torsion for every prime $\ell$ dividing $m$.
\end{enumerate}
\end{theorem}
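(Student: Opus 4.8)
The plan is to leverage the fact that the Stanley--Reisner ring of a flag complex $\Delta$ is the Stanley--Reisner ring of the clique complex of its $1$-skeleton, which is a random graph $G \sim G(n,p)$; thus $\Delta = \mathrm{Cl}(G)$ and the Stanley--Reisner ideal is the edge ideal of the complement graph $\overline{G} \sim G(n,1-p)$. By Hochster's formula, the graded Betti numbers of this ideal are governed by the reduced homology of induced subcomplexes $\Delta_W = \mathrm{Cl}(G[W])$ for $W \subseteq [n]$, and the Betti table has $\ell$-torsion precisely when some induced subcomplex $\Delta_W$ has reduced homology with $\ell$-torsion in some degree. So the whole problem reduces to: with high probability, the random flag complex $\Delta(n,p)$ contains an induced subcomplex carrying $\ell$-torsion in homology for each prime $\ell \mid m$.

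The key idea is to exhibit a \emph{fixed} simplicial complex $T = T_m$ on some bounded number $k$ of vertices which is flag and whose reduced homology (over $\Z$) has $m$-torsion --- for instance a minimal triangulation of the real projective plane for $m=2$ (the $6$-vertex $\mathbb{RP}^2$), or more generally a flag triangulation of a lens space or of a Moore space $M(\Z/m,j)$; one must check such triangulations can be taken to be flag, e.g. by passing to a barycentric subdivision, which is always flag, while barycentric subdivision preserves homeomorphism type and hence the torsion in homology. Then the plan is: (i) show that with high probability $G(n,p)$ contains an induced subgraph on some $k$-subset $W$ whose clique complex is exactly $T$ --- equivalently, $G[W]$ is the $1$-skeleton of $T$ with no extra edges and, crucially, no "phantom" higher cliques, i.e. every clique of $G[W]$ is a face of $T$; (ii) conclude via Hochster that $\widetilde H_*(\Delta_W;\Z)$ has $m$-torsion, hence over $\Z/\ell$ for each $\ell \mid m$ the Betti numbers jump relative to $\Q$; (iii) since $T$ is flag, the induced subcomplex condition is purely a condition on which edges of $G[W]$ are present, so this is a standard induced-subgraph containment statement for $G(n,p)$.

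For step (i), the containment of a fixed induced subgraph on $k$ vertices in $G(n,p)$ holds with high probability as long as $p$ is not too close to $0$ or $1$: partition (most of) $[n]$ into $\Theta(n/k)$ disjoint blocks of size $k$, and for each block the probability it induces exactly the desired graph is $p^{a}(1-p)^{b}$ where $a+b = \binom{k}{2}$, which is at least $(\min(p,1-p))^{\binom k2} \geq (\min(p,1-p))^{\binom k 2}$; since these events are independent across blocks, the probability that \emph{no} block works is at most $\bigl(1 - (\min(p,1-p))^{\binom k2}\bigr)^{\Theta(n/k)}$, which tends to $0$ provided $n \cdot p^{\binom k2} \to \infty$ (using $p \le 1-\epsilon$ to control the $1-p$ factors). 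This is where the hypothesis $n^{-1/6} \ll p$ enters: it forces $n \cdot p^{\binom k2} \to \infty$ once we know $k$ is small enough, so the real content is to pin down the minimal $k$ for which a flag complex with $m$-torsion exists and check $\binom k2 < 6$, or more honestly to observe that the paper's threshold $n^{-1/6}$ is calibrated to the specific torsion gadget being used (likely $\binom k 2 = 6$ is too big, so presumably a sharper count — using many overlapping or randomly placed copies and a second-moment argument rather than disjoint blocks — is needed; alternatively the gadget has few enough edges). Finally, to get part (1) from part (2), just take $m = \ell$ for a single prime, e.g. $m=2$.

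The main obstacle I anticipate is two-fold and intertwined: first, producing an \emph{explicit flag} complex with $m$-torsion that is as small as possible (barycentric subdivision is safe but blows up the vertex count, worsening the probability estimate and the allowed range of $p$), and second, matching the resulting edge-count to the stated threshold $n^{-1/6} \ll p$ --- this likely requires a more careful second-moment / many-trials argument for the induced-subgraph containment rather than the crude disjoint-blocks bound, and care that "induced subcomplex of the \emph{flag} complex" is automatic from "induced subgraph" only because $T$ is itself flag (otherwise extra high-dimensional faces could appear and kill the torsion). Controlling that no unwanted cliques appear in $G[W]$ is exactly the flag hypothesis on $T$, so everything hinges on the combinatorial-topology input of a small flag complex with prescribed torsion.
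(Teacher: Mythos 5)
Your skeleton matches the paper exactly: Hochster's formula reduces the claim to exhibiting a fixed flag complex with $m$-torsion in homology as an induced subcomplex, and then you run an induced-subgraph containment argument for $G(n,p)$. The paper does precisely this. However, there are two genuine gaps in the way you propose to execute both halves, and they are linked.

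First, the probability estimate. Your disjoint-blocks argument yields a threshold of the form $p \gg n^{-1/a}$ where $a$ is the number of edges (or $\binom{k}{2}$) in the gadget. This is strictly weaker than the claimed $n^{-1/6}$, and, worse, since any flag gadget with $m$-torsion must grow in size with $m$, this threshold would \emph{depend on} $m$ and degrade as $m\to\infty$, so you would not obtain the stated theorem. What the paper uses is a second-moment / Bollob\'as-type statement (its Proposition 4.3, an induced-subgraph variant of Bollob\'as's theorem) giving threshold $p \gg n^{-1/m(G')}$ where $m(G') = \max_{H\subset G'} |E(H)|/|V(H)|$ is the \emph{essential density}. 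The crucial point is that $m(G') \le \tfrac12\maxdeg(G')$ regardless of $|V(G')|$, so a gadget with uniformly bounded vertex degree gives a threshold uniform in $m$. You correctly suspect a second-moment argument is needed, but you do not supply it, and without the essential-density formulation the dependence on $m$ cannot be removed.

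Second, and consequently, the real content of the paper's proof is constructing a flag complex $X_m$ with $m$-torsion in $H_1$ and $\maxdeg(X_m)\le 12$ for \emph{all} $m\ge 2$; this is Theorem 3.1 and occupies all of \S 3, a careful flag, bounded-degree modification of Newman's telescope-and-sphere construction. Your proposal to take a triangulation of $\R P^2$, a lens space, or a Moore space and pass to barycentric subdivision handles flagness, but barycentric subdivision does \emph{not} control maximum degree: in the subdivision of a $2$-complex, an original vertex $v$ becomes a vertex of degree equal to the number of edges at $v$ plus the number of faces at $v$ (roughly $2\deg(v)$ for a surface), so degrees are not bounded and in fact grow under repeated subdivision. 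Since the vertex count of any $m$-torsion gadget must grow with $m$, some vertex degrees will grow too, and your essential density --- and hence your threshold --- will not be uniform in $m$. Producing the bounded-degree flag gadget is the missing step, and it is the heart of the paper.
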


The proof of Theorem~\ref{thm:m torsion} (2), which implies part (1), proceeds as follows.  By Hochster's formula~\cite[Theorem~5.5.1]{bruns-herzog}, it suffices to show that some induced subcomplex of $\Delta$ has $m$-torsion in its homology.  For each $m$, we modify Newman's construction~\cite[\S3]{newman} to build a flag complex $X_m$ with a small number of vertices and with  $m$-torsion in $H_1(X_m)$.  We then apply a variant of Bollob\'{a}s's theorem on subgraphs of a random graph~\cite[Theorem~8]{Bollobas-subgraph} to prove that $X_m$ appears as an induced subcomplex of $\Delta$ with high probability as $n\to \infty$, yielding Theorem~\ref{thm:m torsion}.

The most common example of characteristic dependence is Reisner's example, coming from a triangulation of $\mathbb R\mathbb P^2$~\cite[\S5.3]{bruns-herzog}.  Other previous research on characteristic {\em independence} of monomial ideals includes ~\cite{terai-hibi,katzman,hibi-kimura-murai} for edge ideals and ~\cite[Theorem 5.1]{dalili-kummuni} for monomial ideals with component-wise linear resolutions.

Theorem~\ref{thm:m torsion} also fits into an emerging literature on random monomial ideals.  
This began with~\cite{random-monomial}, which outlined an array of frameworks for random monomial ideals, including models related to random simplicial complexes such as~\cite{costa-farber, kahle}.
The average Betti table of a random monomial ideal is analyzed in ~\cite{average-random}, while ~\cite{stilverstein-wilburne-yang} examines threshold phenomena in random models from~\cite{random-monomial}.  
Banerjee and Yogeshwaran study homological properties of the edge ideals of Erd\H{o}s--R\'{e}nyi random graphs in~\cite{banerjee}.  
There is also~\cite{erman-yang}, which uses random monomial methods to demonstrate some asymptotic syzygy phenomena from~\cite{ein-lazarsfeld-asymptotic,ein-erman-lazarsfeld-random}.  And finally, Theorem \ref{thm:m torsion} is thematically connected with ~\cite{torsion-burst}, which analyzes torsion homology in random simplicial complexes (whereas Theorem~\ref{thm:m torsion} analyzes the simpler question of finding $m$-torsion in the homology of {\em some} induced subcomplex of $\Delta(n,p)$).

\subsection{Asymptotic syzygies and heuristics}\label{subsec:asymptotic}
One of our main motivations for studying Question~\ref{q:rm} is a belief that this will provide heuristic insights into more geometric questions like Question~\ref{q:ver}.  We now explain this connection in more detail.

The study of asymptotic syzygies, as introduced by Ein and Lazarsfeld in~\cite{ein-lazarsfeld-asymptotic}, examines the overarching behavior of syzygies of algebraic varieties under increasingly ample embeddings.  Specifically, Ein and Lazarsfeld fixed a smooth variety $X$ with a very ample line bundle $A$ and considered the syzygies of $X$ embedded by $dA$ for $d\gg 0$.  They proved an asymptotic nonvanishing result which showed that the limiting behavior essentially only depended on $\dim X$.
Other researchers then found comparable limiting behavior for other families from geometry~\cite{zhou-integral,ein-erman-lazarsfeld-quick} and combinatorics~\cite{cjw,erman-yang}.  In a similar vein, ~\cite{ein-erman-lazarsfeld-random} conjectured that the syzygies of smooth varieties should asymptotically converge to a normal distribution, in an appropriate sense; that conjecture was verified for the combinatorial families in~\cite{erman-yang}.



In short, work on asymptotic syzygies suggests that the overarching behavior will be similar across many geometric and combinatorial examples.  This is the context in which Questions~\ref{q:ver} and \ref{q:rm} are connected.  Whereas Ein and Lazarsfeld identified behavior in geometric settings which carried over to combinatorial settings, we look in the opposite direction: could a combinatorial result shed light on asymptotic syzygies in geometric examples?\footnote{A similar idea appears in~\cite{ein-erman-lazarsfeld-random}, where a random model based on Boij-S\"oderberg theory is used to generate quantitative conjectures about the entries of Betti tables.} 

The study of $\ell$-torsion is ripe for such a heuristic due to the lack of results and the difficulty of computing the Betti numbers of higher dimensional varieties.  
For instance, for Veronese embeddings of $\PP^r$, the only results on $\ell$-torsion are for the $2$-uple embedding (exploiting the combinatorial description of ~\cite{reiner-roberts}): Andersen's thesis~\cite{anderson} shows that the Betti table of the $2$-uple embedding of $\PP^r$ has $5$-torsion for any $r\geq 6$, and Jonsson generalized this to produce $\ell$-torsion for $\ell=3,5,7,11,$ and $13$ and for various $r$~\cite{jonsson-experimental}. 
See~\cite{bouc,hashimoto} for similar results.
But even for $d$-uple embeddings of $\PP^r$, there are no examples of torsion when $d>2$ and no conjectures for any fixed $r\geq 2$.

The random flag complex model used in this paper was previously studied in work of Erman and Yang~\cite[Theorem~1.3]{erman-yang}, and they showed that if $n^{-1/(r-1)} \ll p \ll n^{-1/r}$, then  the Betti table of the Stanley--Reisner ideal of $\Delta(n,p)$ exhibits some of the asymptotic behavior of $r$-dimensional varieties from ~\cite{ein-lazarsfeld-asymptotic}.  We view Theorem~\ref{thm:m torsion}, which holds for $n^{-1/(r-1)} \ll p \ll n^{-1/r}$ when $r\geq 7$, as providing a heuristic for $\ell$-torsion in the asymptotic syzygies of a smooth variety $X$ of $\dim X \geq 7$.  
For concreteness, in the case of $\PP^r$, we conjecture:

\begin{conjecture}\label{conj:dependence}
Let $r\geq 7$.  For any $d\gg 0$, the Betti table of $\mathbb P^r$ under the $d$-uple embedding depends on the characteristic.
\end{conjecture}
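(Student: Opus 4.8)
The plan is to translate Conjecture~\ref{conj:dependence} into a purely combinatorial question and then attack that question by adapting the mechanism behind Theorem~\ref{thm:m torsion}. The homogeneous coordinate ring of the $d$-uple embedding of $\PP^r$ is the Veronese subring $R = k[x_0,\dots,x_r]^{(d)}$, which (since the embedding is projectively normal) is the affine semigroup ring of
\[
Q_{r,d} = \bigl\{\, a \in \Z_{\ge 0}^{r+1} \ :\ \deg a \equiv 0 \bmod d \,\bigr\},
\]
presented by the polynomial ring on the $\binom{r+d}{d}$ monomials of degree $d$. By the standard description of the multigraded Betti numbers of an affine semigroup ring in terms of squarefree divisor complexes (a Hochster-type formula; cf.\ \cite[Theorem~5.5.1]{bruns-herzog}), one has, for $a$ with $\deg a \equiv 0 \bmod d$,
\[
\beta_{i,a}(R;k) = \dim_k \td H_{i-1}(\Delta_a;k), \qquad
\Delta_a := \Bigl\{\, F \subseteq \{\text{degree-}d\text{ monomials}\} \ :\ \textstyle\prod_{m\in F} m \ \big|\ x^a \,\Bigr\}.
\]
By universal coefficients, any $\ell$-torsion in $\td H_*(\Delta_a;\Z)$ makes some $\beta_{i,a}(R)$ strictly larger over a field of characteristic $\ell$ than over $\Q$; since all $\beta_{i,a}\ge 0$, this forces both the finely and the coarsely graded Betti tables of $R$ to depend on the characteristic. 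Thus Conjecture~\ref{conj:dependence} reduces to the following: \emph{for every $r\ge 7$ and every $d\gg 0$ there is a multidegree $a$, with $\deg a\equiv 0\bmod d$, such that $\td H_*(\Delta_a;\Z)$ has torsion.} Here $\Delta_a$ is a ``packing complex'': its vertices are the degree-$d$ monomials dividing $x^a$, and a set of them is a face exactly when those monomials can be multiplied together inside the ``box'' $x^a$.

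The first step would be to produce a single torsion example. Following the strategy of Theorem~\ref{thm:m torsion}, one fixes a small simplicial complex $Y_\ell$ with $\ell$-torsion in $H_1$, built as in Newman's construction \cite[\S3]{newman}, and tries to choose $r$, $d$, and $a$ so that $\Delta_a$ deformation retracts onto $Y_\ell$ --- or at least so that $\td H_*(\Delta_a;\Z)$ inherits the torsion of $H_1(Y_\ell)$, which would then contribute to $\beta_{2,\deg a}(R)$. Concretely, one encodes the vertices of $Y_\ell$ as widely separated degree-$d$ monomials and tunes the capacities $(a_0,\dots,a_r)$ so that the only packing obstructions among those monomials reproduce the minimal non-faces of $Y_\ell$. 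The separation needed is what forces $d$ to be large, and the number of variables needed to realize that separation is what forces $r$ to be large; matching the resulting threshold with the condition $r\ge 7$ --- equivalently the regime $n^{-1/6}\ll p$ of Theorem~\ref{thm:m torsion} --- would be a useful consistency check on the heuristic. Alternatively, for one small pair $(r_0,d_0)$ such an $a$ could be exhibited by computer, in the spirit of the $d=2$ computations of Andersen \cite{anderson} and Jonsson \cite{jonsson-experimental} based on the description of \cite{reiner-roberts}.

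The hard part --- and the reason this is stated only as a conjecture --- is the passage from one value of $d$ to all $d\gg 0$. Packing complexes are rigid: not every simplicial complex, nor even every flag complex, arises as some $\Delta_a$, so there is no analogue of the Bollob\'as-type random-embedding argument \cite{Bollobas-subgraph} used in the proof of Theorem~\ref{thm:m torsion}. What one needs instead is a \emph{persistence} principle: if the $d$-Veronese of $\PP^r$ admits a torsion-carrying $\Delta_a$, then so does the $d'$-Veronese of $\PP^{r'}$ for all (or at least cofinally many) $d'\ge d$ and $r'\ge r$. The natural tools are the structural maps comparing different Veroneses --- the monomial-multiplication maps $\Delta_a\to\Delta_{a+d\,e_0}$ that enlarge the capacity by $x_0^d$, the maps obtained by adjoining a variable, and the factorization $v_{d'}=v_{d'/d}\circ v_d$ of Veronese embeddings --- together with Mayer--Vietoris, nerve, or discrete Morse arguments showing that none of these maps kills the relevant torsion class. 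Establishing such a monotonicity principle for torsion in Veronese syzygies is, as far as we know, open; Theorem~\ref{thm:m torsion} is exactly the evidence suggesting it should hold. (In representation-theoretic terms, the conjecture asserts that the integral Koszul complex $\Lambda^\bullet(\Sym^d k^{r+1})\otimes\Sym^\bullet(\Sym^d k^{r+1})$ has torsion homology for all $d\gg 0$, and the persistence step is precisely what would promote a single such computation to an asymptotic statement.)
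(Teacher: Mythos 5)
This statement is a \emph{conjecture}, not a theorem; the paper neither proves it nor claims to, so there is no proof of the paper's to compare your attempt against. The only support the paper offers for Conjecture~\ref{conj:dependence} is the heuristic laid out in \S\ref{subsec:asymptotic}: by a result of Erman--Yang, the random flag complex $\Delta(n,p)$ with $n^{-1/(r-1)} \ll p \ll n^{-1/r}$ exhibits asymptotic syzygy behavior analogous to that of an $r$-dimensional variety, and Theorem~\ref{thm:m torsion} holds on this range precisely when $r-1 \geq 6$, i.e.\ $r\geq 7$; the conjecture is then an extrapolation from the combinatorial model to the geometric one, not a consequence of any argument. You are right to flag, in your own write-up, that what you have sketched is not a proof.

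That said, your analysis is a reasonable and genuinely different framing from the paper's. Reducing via a squarefree-divisor (Hochster-type) formula for Veronese semigroup rings, aiming to realize a Newman-style torsion complex inside some $\Delta_a$, and then isolating the missing ``persistence'' step --- promoting one torsion-carrying $(r,d,a)$ to all $d\gg 0$ --- identifies the actual obstruction more concretely than anything in the paper, which reasons purely by analogy between two probability regimes. Note that the paper's Remark~\ref{rmk:r bound} gestures in a compatible direction (sharpening the bound $m(H_m)\leq 6$ would tighten the conjectured bound on $r$), but it never attempts a reduction of the geometric side to a combinatorial target. Just be clear-eyed that none of the intermediate steps in your sketch are established: there is no known construction of a multidegree $a$ for which $\Delta_a$ carries the torsion of a Newman complex, no known monotonicity of torsion under the structural maps between Veroneses, and the minor miscitation of \cite[Theorem~5.5.1]{bruns-herzog} (which is the Stanley--Reisner Hochster formula, not the semigroup-ring variant) should be replaced by a reference to the appropriate squarefree-divisor-complex statement. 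As a piece of motivational discussion your proposal is sound and adds detail beyond the paper; as a proof it has the gap you yourself identified, which is exactly why the paper states this only as a conjecture.
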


\begin{conjecture}\label{conj:bad primes}
Let $r\geq 7$. As $d \to \infty$, the number of primes $\ell$ such that the Betti table of $\PP^r$ under the $d$-uple embedding has $\ell$-torsion is unbounded.
\end{conjecture}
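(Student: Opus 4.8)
The plan is to transport the mechanism behind Theorem~\ref{thm:m torsion} from random flag complexes to the Veronese setting, guided by the heuristic of \S\ref{subsec:asymptotic}: for $r\ge 7$ (equivalently $r-1\ge 6$) the $d$-uple embedding of $\PP^r$ with $d\gg 0$ should mirror $\Delta(n,p)$ in a regime $n^{-1/(r-1)}\ll p\ll n^{-1/r}$ that overlaps the range $p\gg n^{-1/6}$ of Theorem~\ref{thm:m torsion}. That theorem rests on three inputs: (i) Hochster's formula, which writes $\beta_{i,j}$ of the Stanley--Reisner ring as a sum of dimensions $\dim_k\tilde H_*(\Delta|_W;k)$ of reduced homology of induced subcomplexes --- and since these homologies come from chain complexes of free $\Z$-modules, universal coefficients makes each summand at least as large in characteristic $\ell$ as in characteristic $0$, so $\ell$-torsion in the homology of a \emph{single} induced subcomplex already forces the Betti table to change, with no possibility of cancellation; (ii) the small flag complexes $X_m$ --- a flag modification of Newman's construction \cite{newman} --- carrying $m$-torsion in $H_1$, each of whose subgraphs has density (edges over vertices) at most $6$, which is what produces the exponent in $n^{-1/6}$; (iii) a Bollob\'as-type second-moment argument showing $X_m$ occurs as an induced subcomplex of $\Delta(n,p)$ once $p\gg n^{-1/6}$. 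Step~(i) has a genuine geometric counterpart: Veronese syzygies admit combinatorial ``Hochster-type'' descriptions as reduced homology of squarefree-divisor complexes $\Delta_q$ built on the degree-$d$ monomials in $r+1$ variables (the Reiner--Roberts picture for $d=2$, underlying the torsion examples of \cite{anderson, jonsson-experimental}, and its generalizations); since the $\Delta_q$ are simplicial, the same universal-coefficients monotonicity applies. So the first step is to fix such a model and reduce Conjecture~\ref{conj:bad primes} to the assertion: for each fixed $m$, some complex $\Delta_q$ occurring for $d$-uple $\PP^r$ has $\Z/m$-torsion in its homology once $d\gg_m 0$.

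The heart of the matter is the analogue of steps~(ii)--(iii). One wants to show that, as $d\to\infty$ with $r\ge 7$ fixed, the complexes $\Delta_q$ governing the syzygies of $d$-uple $\PP^r$ become combinatorially rich enough to realize Newman's torsion gadget --- a flag complex with $m$-torsion in $H_1$ --- just as $\Delta(n,p)$ realizes every fixed small complex once $p\gg n^{-1/6}$. In this analogy the growth of the lattice-point complexes with $d$ plays the role of $p$, and the condition $r-1\ge 6$ is exactly the ``room'' needed to house a gadget of density $\le 6$. Two routes suggest themselves. The explicit route chooses the multidegree $q$ and a vertex set of degree-$d$ monomials so that the divisibility pattern defining $\Delta_q$ reproduces $X_m$ on the nose, and then verifies that this $\Delta_q$ contributes to an honest $\Tor$ module. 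The soft route isolates a quasirandomness or expansion property of the family $\{\Delta_q\}$ --- a Veronese analogue of the second-moment estimate behind \cite[Theorem~8]{Bollobas-subgraph} --- from which the appearance of every fixed small subcomplex follows.

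Granting this, the conjecture follows immediately: given $t\ge 1$, choose $m=\ell_1\cdots\ell_t$ a product of $t$ distinct primes; then for all $d\gg_m 0$ some $\Delta_q$ attached to $d$-uple $\PP^r$ has torsion of order divisible by each $\ell_i$, whence by step~(i) the Betti table has $\ell_i$-torsion for every $i$, so the number of torsion primes grows without bound as $d\to\infty$. (The case $t=1$ recovers Conjecture~\ref{conj:dependence}.)

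The main obstacle is that the $d$-uple Veronese is a single deterministic object, with no probability space to average over: the soft route must be upgraded from a probabilistic existence statement to a structural theorem about the family $\{\Delta_q\}$, and the explicit route demands delicate control of lattice-point combinatorics --- in particular a certification that the constructed $\Delta_q$ genuinely \emph{has} the homology of Newman's complex rather than merely containing it as a subcomplex, so that (as in Theorem~\ref{thm:m torsion}) the torsion cannot be washed out. Since no characteristic dependence is known for any $d$-uple embedding with $d>2$, this is a program rather than a short proof; what makes it plausible is precisely Theorem~\ref{thm:m torsion}, showing that in the structurally parallel family of random flag complexes this phenomenon is not exceptional but generic.
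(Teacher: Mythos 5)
This statement is Conjecture~\ref{conj:bad primes}: the paper does \emph{not} prove it, and offers no proof to compare against. What the paper does provide is heuristic motivation, namely that Theorem~\ref{thm:m torsion} holds throughout the overlap of the range $p\gg n^{-1/6}$ with the range $n^{-1/(r-1)}\ll p\ll n^{-1/r}$ in which (by \cite{erman-yang}) $\Delta(n,p)$ mimics the asymptotic syzygies of an $r$-dimensional variety, and $r\ge 7$ is exactly when these ranges intersect. Your proposal reconstructs this motivation faithfully and identifies the same source of the bound $r\ge 7$ (the coarse density estimate $m(H_m)\le\tfrac12\maxdeg(X_m)\le 6$).

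You are also candid that what you have is a program, not a proof, and your diagnosis of the obstruction is the right one: the Bollob\'as second-moment argument is irreducibly probabilistic, and the $d$-uple Veronese for fixed $r$ is a deterministic family with no ambient probability space, so the appearance of a fixed torsion gadget inside the relevant squarefree-divisor complexes $\Delta_q$ would have to be established by an entirely different, structural mechanism. Your step (i) is defensible --- the Hochster-type formula for affine semigroup rings does express graded Tor as reduced simplicial homology of divisor complexes, so the same universal-coefficients monotonicity rules out cancellation of torsion, just as Fact~\ref{fact:depend} does for Stanley--Reisner ideals --- but steps (ii)--(iii) have no known substitute. This is precisely why the paper states the claim as a conjecture and explicitly records (end of \S\ref{subsec:asymptotic} and \S\ref{sec:veronese}) that no example of characteristic dependence is known for any $d$-uple embedding with $d>2$. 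In short: your reasoning is a correct account of the heuristic that led the authors to the conjecture, with the genuine gap correctly located, but it is not and does not claim to be a proof.
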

We will discuss some related conjectures and questions, in more detail, in \S\ref{sec:veronese}.

\smallskip

This paper is organized as follows.  In \S\ref{sec:background}, we review notation and background, including on Betti numbers, Hochster's formula, and random flag complexes.  \S\ref{sec:construction} contains our main construction in which we construct an explicit flag complex $X_m$ with $m$-torsion in homology; see Theorem~\ref{thm:Xm}.  In \S\ref{sec:subgraphs}, we apply a minor variant of Bollob\'{a}s's theorem on subgraphs of a random graph to show that, with high probability, $X_m$ appears as an induced subcomplex of $\Delta(n,p)$ for any $n^{-1/6}\ll p\leq 1-\epsilon$ where $\epsilon > 0$ and $m\geq 2$.  
In \S\ref{sec:2torsion}, we analyze the case of 2-torsion more closely, using the techniques from \S\ref{sec:subgraphs} to expand known results from ~\cite{costa-farber-horak}.  
In \S\ref{sec:Betti numbers}, we combine results from \S\ref{sec:subgraphs} with Hochster's formula to prove Theorem~\ref{thm:m torsion}.  
Finally, in \S\ref{sec:veronese}, we discuss questions about $\ell$-torsion in asymptotic syzygies.

\subsection*{Acknowledgments}
We thank Christine Berkesch, Kevin Kristensen, Rob Lazarsfeld, Andrew Newman, Victor Reiner, Gregory G.\ Smith, and Melanie Matchett Wood for helpful conversations.  We thank Claudiu Raicu and Steven Sam for thoughtful comments on an early draft.

\section{Background and Notation}\label{sec:background}

\subsection{Torsion in Betti tables}\label{subsec:22}
Throughout this paper we will analyze graded algebras, all of which have the following form: there is an ideal $J$ in a polynomial ring $T$ with coefficients in $\ZZ$, where $T/J$ is flat over $\ZZ$, and we are interested in specializations $(T/J)\otimes_{\ZZ} k$ to various fields $k$. Our results focus on graded algebras that arise as the Stanley--Reisner rings of simplicial complexes.  But there are many other potential examples, such as the coordinate rings of Veronese embeddings of projective space, Grassmanians, toric varieties, and so on.  The central questions of this paper are concerned with when the Betti numbers of such algebras depend on the characteristic of $k$.

Let $J$ be a monomial ideal in $T=\ZZ[x_1,\dots,x_n]$.  For a field $k$, the algebraic Betti numbers of $(T/J)\otimes_{\ZZ} k$ are given by
\[
\beta_{i,j}((T/J)\otimes_{\ZZ} k) := \dim_k \Tor^{T\otimes_{\ZZ} k}_i((T/J)\otimes_{\ZZ} k,k)_j.
\]
The collection of all of these Betti numbers is called the Betti table. Since field extensions are flat, Betti numbers are invariant under field extensions and will therefore be the same for any field of the same characteristic.   Semicontinuity implies that
$\beta_{i,j}((T/J)\otimes_{\ZZ} \mathbb Q)\leq \beta_{i,j}((T/J)\otimes_{\ZZ} \mathbb F_{\ell}).$
We say that the Betti table of $J$ {\bf has $\ell$-torsion} if this inequality is strict for some $i,j$, and we say that the Betti table of $J$ {\bf depends on the characteristic} if it has $\ell$-torsion for some prime $\ell$.

\begin{remark}\label{rmk:torsion}
Let $J$ be an ideal in $T=\ZZ[x_1,\dots,x_n]$ which is flat over $\ZZ$.  Let $S = T\otimes_{\ZZ} \mathbb F_{\ell}=\mathbb F_\ell[x_1,\dots,x_n]$ and $I=JS$.  By a standard argument, it follows that
\[
\dim_{\mathbb{F}_\ell} \Tor^{S}_i(S/I,\mathbb F_\ell)_j = \dim_{\mathbb{F}_\ell} (\Tor^T_i(T/J,\ZZ)_j\otimes_\ZZ \mathbb F_\ell) + \dim_{\mathbb{F}_\ell}(\Tor^{\ZZ}_1(\Tor^T_{i+1}(T/J,\ZZ)_j, \mathbb F_\ell)).
\]
In particular, the Betti table of $J$ has $\ell$-torsion if and only if one of the $\Tor_{i+1}^T(T/J,\ZZ)_j$ has $\ell$-torsion as an abelian group.
\end{remark}

\subsection{Graphs and simplicial complexes}
For a simplicial complex $X$, we write $V(X),$ $E(X),$ and $F(X)$ for the set of vertices, edges, and (2-dimensional) faces of $X$, respectively. We use $|*|$ to denote the number of elements in these sets. The degree of a vertex $v$, denoted $\deg(v)$, is the number of edges in $X$ containing $v$.  We write $\maxdeg(X)$ for the maximum degree of any vertex of $X$, and we write $\avg(X)$ for the average degree of a vertex in $X$.

For a pair of graphs $H,G$, we write $H\subset G$ if $H$ is a subgraph of $G$.  We write $H \overset{ind}{\subset} G$ if $H$ is an induced subgraph of $G$, that is, if the vertices of $H$ are a subset of the vertices of $G$ and the edges of $H$ are precisely the edges connecting those vertices within $G$ (see Figure~\ref{fig:induced graph}).  We use similar definitions and notations for a simplicial complex $\Delta'$ to be a subcomplex (or an induced subcomplex) of another complex $\Delta$.  If $\alpha \subset V(\Delta)$, then we let $\Delta|_{\alpha}$ denote the induced subcomplex of $\Delta$ on $\alpha$.

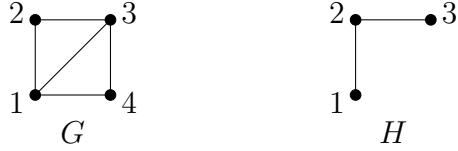
\begin{figure}
\begin{tikzpicture}[baseline=2ex]
    \draw[fill] (0,0) circle [radius=2pt];
    \draw[fill] (0,1) circle [radius=2pt];
    \draw[fill] (1,0) circle [radius=2pt];
    \draw[fill] (1,1) circle [radius=2pt];
    
    \draw (0,0)--(1,0)--(1,1)--(0,0)--(0,1)--(1,1);
    \draw   (.5,-.5) node {$G$};
    \draw   (-.25,-.1) node {$1$};
    \draw   (-.25,1.1) node {$2$};
    \draw   (1.25,1.1) node {$3$};
    \draw   (1.25,-.1) node {$4$};
\end{tikzpicture}
\hspace{2cm}
\begin{tikzpicture}[baseline=2ex]
    \draw[fill] (0,0) circle [radius=2pt];
    \draw[fill] (0,1) circle [radius=2pt];
    \draw[fill] (1,1) circle [radius=2pt];
    \draw (0,0)--(0,1)--(1,1);
    \draw   (.5,-.5) node {$H$};
    \draw   (-.25,-.1) node {$1$};
    \draw   (-.25,1.1) node {$2$};
    \draw   (1.25,1.1) node {$3$};
\end{tikzpicture}
\caption{In the graphs shown above, $H$ is a subgraph of $G$, but it is not the induced subgraph on the vertex set $\{1,2,3\}$ since $H$ is missing the diagonal edge connecting vertices $1$ and $3$.}
\label{fig:induced graph}
\end{figure}

The following definitions, adapted from \cite{Bollobas-subgraph} and \cite{Analytic-bollobas}, will be used in sections \ref{sec:subgraphs}, \ref{sec:2torsion}, 
and \ref{sec:Betti numbers}.

\begin{defn}\label{mG}
The \textbf{essential density} of a graph $G$ is $$m(G):= \max\left\{\frac{|E(H)|}{|V(H)|}\; : \; H\subset G,\, |V(H)|>0\right\},$$
and $G$ is \textbf{strictly balanced} if $m(H)<m(G)$ for all proper subgraphs $H\subset G$.
\end{defn}

For a field $k$, a simplicial complex $\Delta$ on $n$ vertices has a corresponding Stanley--Reisner ideal $I_\Delta\subset S=k[x_1,\dots,x_n]$.  Since these $I_\Delta$ are squarefree monomial ideals, Hochster's formula~\cite[Theorem~5.5.1]{bruns-herzog} relates the Betti table of $S/I_\Delta$ to topological properties of $\Delta$, providing our key tool for studying this Betti table for various fields $k$.  
An immediate consequence of Hochster's formula is the following fact, which characterizes when these Betti tables are different over a field of characteristic $\ell$ than over $\Q$.
\begin{fact}\label{fact:depend}
For a simplicial complex $\Delta$, the Betti table of the Stanley--Reisner ideal $I_\Delta$ has $\ell$-torsion if and only if there exists a subset $\alpha \subset V(\Delta)$ such that $\Delta|_\alpha$ has $\ell$-torsion in one of its homology groups.
\end{fact}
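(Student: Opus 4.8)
The plan is to derive Fact~\ref{fact:depend} directly from Hochster's formula together with Remark~\ref{rmk:torsion}, using the fact that $\ell$-torsion in the Betti table is equivalent to $\ell$-torsion in one of the integral Tor modules $\Tor^T_{i+1}(T/J,\ZZ)$, which Hochster's formula identifies (up to reindexing) with reduced integral simplicial homology of induced subcomplexes of $\Delta$.

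More precisely, let $T = \ZZ[x_1,\dots,x_n]$ and $J = I_\Delta$ the Stanley--Reisner ideal of $\Delta$ over $\ZZ$, which is flat over $\ZZ$ since it is generated by monomials with unit coefficients (so $T/J$ is a free $\ZZ$-module on the squarefree monomials supported on faces of $\Delta$). First I would invoke the integral form of Hochster's formula~\cite[Theorem~5.5.1]{bruns-herzog}, which computes the multigraded pieces of $\Tor^T(T/I_\Delta, -)$ in terms of the reduced homology of induced subcomplexes: in the squarefree degree corresponding to a subset $\alpha \subset V(\Delta)$, one has $\Tor^T_{i}(T/I_\Delta,\ZZ)_\alpha \cong \widetilde H_{|\alpha|-i-1}(\Delta|_\alpha; \ZZ)$, and similarly with $\ZZ$ replaced by any field $k$, with homology taken with coefficients in $k$. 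Hence $\Tor^T_{i}(T/I_\Delta,\ZZ)$, as an abelian group, is a direct sum over all $\alpha$ of the groups $\widetilde H_{|\alpha|-i-1}(\Delta|_\alpha;\ZZ)$.

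Next I would combine this with Remark~\ref{rmk:torsion}: the Betti table of $I_\Delta$ has $\ell$-torsion if and only if some $\Tor^T_{i+1}(T/I_\Delta,\ZZ)_j$ has $\ell$-torsion as an abelian group. By the displayed isomorphism, such an integral Tor module is a direct sum of reduced integral homology groups $\widetilde H_*(\Delta|_\alpha;\ZZ)$ over various $\alpha$, so it has $\ell$-torsion if and only if $\widetilde H_*(\Delta|_\alpha;\ZZ)$ has $\ell$-torsion for some $\alpha$. Finally, by the universal coefficient theorem, $\widetilde H_*(\Delta|_\alpha;\ZZ)$ has $\ell$-torsion for some $\alpha$ if and only if $\widetilde H_*(\Delta|_\alpha; \ZZ)$ differs from $\widetilde H_*(\Delta|_\alpha; \mathbb F_\ell)$ in rank for some $\alpha$ --- equivalently, $\Delta|_\alpha$ has $\ell$-torsion in one of its homology groups in the sense used in the paper. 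Chaining these equivalences gives the claim. One should note that restricting to $\alpha \subsetneq V(\Delta)$ versus allowing $\alpha = V(\Delta)$ makes no difference, since $\Delta|_{V(\Delta)} = \Delta$ is itself one of the induced subcomplexes.

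The routine but slightly delicate point --- and the only real obstacle --- is bookkeeping the two notions of ``$\ell$-torsion'' and making sure they match: ``$\ell$-torsion'' for the Betti table means a strict inequality $\beta_{i,j}(\cdot \otimes \Q) < \beta_{i,j}(\cdot \otimes \mathbb F_\ell)$, whereas ``$\ell$-torsion in homology'' means the abelian group $\widetilde H_*(\Delta|_\alpha;\ZZ)$ has an element of order $\ell$. Remark~\ref{rmk:torsion} already packages the translation from Betti numbers to integral Tor, so what remains is to confirm via Hochster's formula and universal coefficients that integral Tor has $\ell$-torsion exactly when some induced subcomplex has $\ell$-torsion in integral homology; this is immediate once one writes down the isomorphisms carefully, with no genuine difficulty beyond tracking indices and the reduced-homology conventions.
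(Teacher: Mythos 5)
The paper states this Fact without proof, calling it an immediate consequence of Hochster's formula, so there is no official argument to compare against; but your proposal is a correct and careful filling-in of that gap. Your route — using the integral version of Hochster's formula $\Tor^T_i(T/I_\Delta,\ZZ)_\alpha \cong \widetilde H_{|\alpha|-i-1}(\Delta|_\alpha;\ZZ)$ (which holds because the Koszul resolution of $\ZZ$ over $T=\ZZ[x_1,\dots,x_n]$ works integrally, and $T/I_\Delta$ is a free $\ZZ$-module) and then invoking Remark~\ref{rmk:torsion} to reduce Betti-table $\ell$-torsion to $\ell$-torsion in the integral Tor modules — is one natural way to make this precise. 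A marginally more direct alternative, and likely what the paper has in mind, is to apply the field-coefficient Hochster's formula separately over $\Q$ and $\mathbb F_\ell$ to get $\beta_{i,\alpha}$ equal to $\dim \widetilde H_{|\alpha|-i-1}(\Delta|_\alpha;k)$ for $k=\Q$ or $\mathbb F_\ell$, and then use universal coefficients to observe that these dimensions jump for some $(i,\alpha)$ exactly when some $\widetilde H_*(\Delta|_\alpha;\ZZ)$ has $\ell$-torsion; this bypasses both the integral Hochster isomorphism and Remark~\ref{rmk:torsion}. Both arguments are standard and equivalent, and the bookkeeping you flag (reduced vs.\ unreduced, $\alpha=V(\Delta)$ vs.\ proper, the universal coefficient translation between integral torsion and dimension jumps) indeed works out: $\widetilde H_0$ and the top-dimensional $\widetilde H_{|\alpha|-1}$ are torsion-free, so every place torsion can occur corresponds to a valid $(i,j)$ in the Betti table. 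Your proof is correct.
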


\subsection{Monomial ideals from random flag complexes}\label{subsec:random flag}
Recall that a flag complex is a simplicial complex obtained from a graph by adjoining a $k$-simplex to every $(k+1)$-clique in the graph, which is called taking the clique complex. Therefore, a flag complex is entirely determined by its underlying graph. We write $\Delta \sim \Delta(n,p)$ to denote the flag complex which is the clique complex of an Erd\H{o}s--R\'enyi random graph $G(n,p)$ on $n$ vertices, where each edge is attached with probability $p$.  If $\alpha\subset V(\Delta)$, then we note that $\Delta|_{\alpha}$ is also flag.  The properties of random flag complexes have been analyzed extensively, with~\cite{kahle} providing an overview.  As discussed in the introduction, the syzygies of Stanley--Reisner ideals of random flag complexes were first studied in~\cite{erman-yang}.

\subsection{Probability}
We use the notation $\P[*]$ for the probability of an event.  If $X_n$ is a sequence of random variables, then we say that the event $X_n=x_0$ occurs {\bf with high probability as $n\to \infty$} if $\P[X_n = x_0]\to 1$ as $n\to \infty$. For a random variable $X$, we use $\E[X]$ for the expected value of $X$ and $\Var(X)$ for the variance of $X$.  

For functions $f(x)$ and $g(x)$, we write $f\ll g$ if $\ds\lim_{x\to \infty} f/g \to 0$.  We use $f\in O(g)$ if there is a constant $N$ where $|f(x)|\leq N|g(x)|$ for all sufficiently large values of $x$, and we use $f\in \Omega(g)$  if there is a constant $N'$ where $|f(x)|\geq N'|g(x)|$ for all sufficiently large values of $x$.

\section{Constructing a flag complex with $m$-torsion in homology}\label{sec:construction}
The goal of this section is to prove the following result:
\begin{theorem}\label{thm:Xm}
For every $m \geq 2$, there exists a two-dimensional flag complex $X_m$ such that the torsion subgroup of $H_1(X_m)$ is isomorphic to $\Z/m\Z$ and $\maxdeg(X_m)\leq 12$.
\end{theorem}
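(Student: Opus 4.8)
The plan is to build $X_m$ explicitly by assembling a bounded number of bounded-size, individually flag ``gadgets'' whose combinatorial pattern encodes the binary expansion of $m$, and then to verify the flag condition and compute $H_1$. The reason the binary expansion is forced on us is a degree obstruction: attaching a $2$-cell along a loop of combinatorial length $L$ creates a vertex of degree $\gtrsim L$, so the naive simplicial model of the Moore space $S^1\cup_m e^2$ (one $2$-cell glued along $x^m$) has $\maxdeg$ growing with $m$. To keep $\maxdeg(X_m)$ uniformly bounded we must realize the ``multiplication by $m$'' relation through $O(\log m)$ relations of bounded length each, arranged so that every vertex meets only $O(1)$ of the gadgets. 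This is the same mechanism exploited in Newman's construction~\cite[\S3]{newman}, which we adapt to the flag, bounded-degree setting.

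The construction proceeds in three steps. \emph{(Doubling tower.)} Fix a flag triangulation $B$ of the M\"obius band of bounded size, with core cycle $c$ and boundary cycle $\partial B$ both induced in $B$ and with $[\partial B]=2[c]$ in $H_1(B)\cong\Z\langle c\rangle$. Put $k=\lfloor\log_2 m\rfloor$, take disjoint induced cycles $z_0,\dots,z_k$, and for $1\le i\le k$ glue a copy $B_i$ of $B$ by identifying its core with $z_{i-1}$ and $\partial B_i$ with $z_i$; each identification is along an induced subcomplex, so the result is still flag, and in its $H_1$ (free of rank $1$) one has $[z_i]=2^i[z_0]$. \emph{(Summing tree.)} Let $S=\{i:\text{the }i\text{th binary digit of }m\text{ is }1\}$, so $\sum_{i\in S}2^i=m$. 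Using a bounded-size flag ``addition gadget'' (a triangulated pair of pants, or a disk-with-handles complex, with boundary cycles $\partial_0,\partial_1,\partial_2$ and $[\partial_0]=[\partial_1]+[\partial_2]$), build a balanced binary tree of such gadgets whose leaves are (fresh copies of) the $z_i$ for $i\in S$ and whose output is a single bounded-length induced cycle $w$ with $[w]=\sum_{i\in S}[z_i]=m[z_0]$. \emph{(Cap.)} Glue a bounded triangulated disk $D$ with $\partial D=w$; call the result $X_m$. It is two-dimensional and flag by construction, and with care in sizing the gadgets (each vertex lies in boundedly many of them, via thin cylinder ``buffer'' gadgets where several pieces would otherwise meet) one arranges $\maxdeg(X_m)\le 12$.

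It then remains to compute homology. Passing to the homotopy-equivalent CW complex built from the $O(\log m)$ cycles and gadgets, one has an explicit chain complex with $O(\log m)$ cells; the cap $D$ forces $m[z_0]=0$, and a Smith normal form computation (equivalently, an induction on the tower and summing tree using Mayer--Vietoris) should show that the boundary map has the relevant elementary divisors $1,\dots,1,m$, while all the auxiliary cycles (M\"obius cores $z_i$, the handle loops of the pairs of pants, and the subdivision cycles on the glued circles) contribute only to the \emph{free} part of $H_1$. Hence $\mathrm{tors}(H_1(X_m))\cong\Z/m\Z$, with $z_0$ generating the torsion.

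The main obstacle I expect is precisely this last bookkeeping: the gadgets inject many auxiliary $1$-cycles, and they must be oriented and glued so that these either bound or carry only free classes --- otherwise the torsion could come out as $\Z/d$ for a proper divisor $d\mid m$, or pick up spurious extra cyclic summands. Getting the explicit bound $\maxdeg(X_m)\le 12$ is a secondary but genuine constraint: it rules out the blunt route of reaching the flag condition by barycentric subdivision (which always yields a flag complex and preserves the homotopy type and bounded degrees, but with a larger constant), so the gadgets and all their gluing cycles must be designed to be induced and low-valence from the outset.
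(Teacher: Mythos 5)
Your proposal follows essentially the same route as the paper: both adapt Newman's telescope-and-sphere construction (your M\"obius-band doubling tower \emph{is} the telescope of punctured projective planes, and your pair-of-pants summing tree plus disk cap \emph{is} the sphere with $k$ square holes) to the flag, bounded-degree setting, realizing the repeated-squares presentation of $\Z/m\Z$. The paper then supplies the explicit flag triangulations, the gluing bookkeeping (subdivision vertices so that all identifications are along induced cycles with overlaps only on those cycles), and a local degree-$14$ vertex-splitting move needed to pin down $\maxdeg(X_m)\le 12$ and verify the $H_1$ computation --- exactly the obstacles you correctly identify as the remaining work.
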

This result is the foundation of our proof of Theorem~\ref{thm:m torsion} as we will show that this specific complex $X_m$ appears as an induced subcomplex of $\Delta(n,p)$ with high probability as $n \to \infty$ under the hypotheses of that theorem.

Here is an overview of our proof of Theorem~\ref{thm:Xm}, which is largely based on ideas from~\cite{newman}.  Given an integer $m\geq 2$, we write its binary expansion as $m=2^{n_1}+\cdots+2^{n_k}$ with $0\leq n_1<\cdots<n_k$. Note that $k$ is the Hamming weight of $m$ and $n_k= \lfloor \log_2(m) \rfloor$. With this setup, the ``repeated squares presentation'' of $\Z/m\Z$ is given by
\[\Z/m\Z = \la \gamma_0,\gamma_1,\dots,\gamma_{n_k}\; |\; 2\gamma_0=\gamma_1, 2\gamma_1=\gamma_2,\dots, 2\gamma_{n_k-1}=\gamma_{n_k}, \gamma_{n_1}+\cdots+\gamma_{n_k}=0 \ra.\] We will construct a two-dimensional flag complex $X_m$ such that the torsion subgroup of $H_1(X_m)$ has this presentation. To do so, we follow Newman's ``telescope and sphere'' construction in \cite{newman}, where $Y_1$ is the telescope satisfying $$H_1(Y_1) \cong \la \gamma_0,\gamma_1,\dots,\gamma_{n_k}\; |\; 2\gamma_0=\gamma_1, 2\gamma_1=\gamma_2,\dots, 2\gamma_{n_k-1}=\gamma_{n_k} \ra,$$ $Y_2$ is the sphere satisfying $$H_1(Y_2) \cong \la \tau_1,\dots,\tau_k \; |\; \tau_1+\cdots+\tau_k=0 \ra,$$ and $X_m$ is created by gluing $Y_1$ and $Y_2$ together to yield a complex with the desired $H_1$-group.  Because we want our construction to be a flag complex with $\maxdeg(X_m)\leq 12$, we cannot simply quote Newman's results.  Instead, we must alter the triangulations to ensure that $Y_1$, $Y_2$, and $X_m$ are flag complexes.  Then, we must further alter the construction to reduce $\maxdeg(X_m)$. However, each of our constructions is homeomorphic to each of Newman's constructions.


\begin{notation}\label{notation:hamming etc}
Throughout the remainder of this section we assume that $m\geq 2$ is given.  We write $m=2^{n_1}+\cdots+2^{n_k}$ with $0\leq n_1<\cdots<n_k$. To simplify notation, we also denote $X_m$ by $X$ for the remainder of this section.
\end{notation}

\subsection{The telescope construction}
The telescope $Y_1$ that we construct will be homeomorphic to the $Y_1$ that Newman constructs in \cite[Proof of Lemma~3.1]{newman} for the $d=2$ case.  We start with building blocks which are punctured projective planes; in contrast with~\cite{newman}, our blocks are triangulated so that each is a flag complex. Explicitly, for each $i=0,\dots,(n_k-1)$, we produce a building block which is a triangulated projective plane with a square face removed, with vertices, edges, and faces as illustrated in Figure~\ref{fig:Y1}.  Our building blocks differ from Newman's in order to ensure that $Y_1$ and the final simplicial complex $X$ are flag complexes; for instance, we need to add extra vertices $v'_{8i},\dots,v'_{8i+7}$. 


\begin{figure}
\begin{center}
    \begin{tikzpicture}[scale=1]
    \draw [fill=lightgray, thick] (-1,3)--(-3,1)--(-1,2)--(-1,3);
    \draw [fill=lightgray, thick] (-3,1)--(-2,1)--(-1,2)--(-3,1);
    \draw [fill=lightgray, thick] (-2,1)--(-1,1)--(-1,2)--(-2,1);
    \draw [fill=lightgray, thick] (-3,1)--(-3,-1)--(-2,1)--(-3,1);
    \draw [fill=lightgray, thick] (-2,1)--(-3,-1)--(-2,-1)--(-2,1);
    \draw [fill=lightgray, thick] (-2,1)--(-2,-1)--(-1,-1)--(-2,1);
    \draw [fill=lightgray, thick] (-2,1)--(-1,-1)--(-1,1)--(-2,1);
    \draw [fill=lightgray, thick] (-3,-1)--(-1,-3)--(-2,-1)--(-3,-1);
    \draw [fill=lightgray, thick] (-2,-1)--(-1,-3)--(-1,-2)--(-2,-1);
    \draw [fill=lightgray, thick] (-2,-1)--(-1,-2)--(-1,-1)--(-2,-1);
    \draw [fill=lightgray, thick] (-1,-2)--(-1,-3)--(1,-3)--(-1,-2);
    \draw [fill=lightgray, thick] (-1,-2)--(1,-3)--(1,-2)--(-1,-2);
    \draw [fill=lightgray, thick] (-1,-2)--(1,-2)--(1,-1)--(-1,-2);
    \draw [fill=lightgray, thick] (-1,-2)--(1,-1)--(-1,-1)--(-1,-2);
    \draw [fill=lightgray, thick] (1,-2)--(1,-3)--(3,-1)--(1,-2);
    \draw [fill=lightgray, thick] (1,-2)--(3,-1)--(2,-1)--(1,-2);
    \draw [fill=lightgray, thick] (1,-2)--(2,-1)--(1,-1)--(1,-2);
    \draw [fill=lightgray, thick] (2,-1)--(3,-1)--(3,1)--(2,-1);
    \draw [fill=lightgray, thick] (2,-1)--(3,1)--(2,1)--(2,-1);
    \draw [fill=lightgray, thick] (2,-1)--(2,1)--(1,1)--(2,-1);
    \draw [fill=lightgray, thick] (2,-1)--(1,1)--(1,-1)--(2,-1);
    \draw [fill=lightgray, thick] (2,1)--(3,1)--(1,3)--(2,1);
    \draw [fill=lightgray, thick] (2,1)--(1,3)--(1,2)--(2,1);
    \draw [fill=lightgray, thick] (2,1)--(1,2)--(1,1)--(2,1);
    \draw [fill=lightgray, thick] (1,2)--(1,3)--(-1,3)--(1,2);
    \draw [fill=lightgray, thick] (1,2)--(-1,3)--(-1,2)--(1,2);
    \draw [fill=lightgray, thick] (1,2)--(-1,2)--(-1,1)--(1,2);
    \draw [fill=lightgray, thick] (1,2)--(-1,1)--(1,1)--(1,2);
    
    \node (0) at (-1,3) {};
    \node (0') at (1,-3) {};
    \node (1) at (-3,1) {};
    \node (1') at (3,-1) {};
    \node (2) at (-3,-1) {};
    \node (2') at (3,1) {};
    \node (3') at (1,3) {};
    \node (3) at (-1,-3) {};
    \node (4) at (-1,1) {};
    \node (5) at (-1,-1) {};
    \node (6) at (1,-1) {};
    \node (7) at (1,1) {};
    \node (8) at (-1,2) {};
    \node (9) at (-2,1) {};
    \node (10) at (-2,-1) {};
    \node (11) at (-1,-2) {};
    \node (12) at (1,-2) {};
    \node (13) at (2,-1) {};
    \node (14) at (2,1) {};
    \node (15) at (1,2) {};
    
    \node [above] at (-1,3) {$\scriptstyle v_{4i}$};
    \node [below] at (1,-3) {$\scriptstyle v_{4i}$};
    \node [left] at (-3,1) {$\scriptstyle v_{4i+1}$};
    \node [right] at (3,-1) {$\scriptstyle v_{4i+1}$};
    \node [left] at (-3,-1) {$\scriptstyle v_{4i+2}$};
    \node [right] at (3,1) {$\scriptstyle v_{4i+2}$};
    \node [above] at (1,3) {$\scriptstyle v_{4i+3}$};
    \node [below] at (-1,-3) {$\scriptstyle v_{4i+3}$};
    \node [below right] at (-1,1) {$\scriptstyle v_{4i+4}$};
    \node [above right] at (-1,-1) {$\scriptstyle v_{4i+5}$};
    \node [above left] at (1,-1) {$\scriptstyle v_{4i+6}$};
    \node [below left] at (1,1) {$\scriptstyle v_{4i+7}$};
    \node [above right] at (-1,2) {$\scriptstyle v'_{8i}$};
    \node [above left] at (-1.8,1) {$\scriptstyle v'_{8i+1}$};
    \node [above left] at (-2,-1) {$\scriptstyle v'_{8i+2}$};
    \node [below right] at (-1.05,-2.05) {$\scriptstyle v'_{8i+3}$};
    \node [below right] at (0.98,-1.98) {$\scriptstyle v'_{8i+4}$};
    \node [above right] at (2,-1.07) {$\scriptstyle v'_{8i+5}$};
    \node [above right] at (1.9, 0.93) {$\scriptstyle v'_{8i+6}$};
    \node [above] at (0.69,2.2) {$\scriptstyle v'_{8i+7}$};
    
    \draw[fill] (-1,3) circle [radius=2pt];
    \draw[fill] (1,-3) circle [radius=2pt];
    \draw[fill] (-3,1) circle [radius=2pt];
    \draw[fill] (3,-1) circle [radius=2pt];
    \draw[fill] (-3,-1) circle [radius=2pt];
    \draw[fill] (3,1) circle [radius=2pt];
    \draw[fill] (1,3) circle [radius=2pt];
    \draw[fill] (-1,-3) circle [radius=2pt];
    \draw[fill] (-1,1) circle [radius=2pt];
    \draw[fill] (-1,-1) circle [radius=2pt];
    \draw[fill] (1,-1) circle [radius=2pt];
    \draw[fill] (1,1) circle [radius=2pt];
    \draw[fill] (-1,2) circle [radius=2pt];
    \draw[fill] (-2,1) circle [radius=2pt];
    \draw[fill] (-2,-1) circle [radius=2pt];
    \draw[fill] (-1,-2) circle [radius=2pt];
    \draw[fill] (1,-2) circle [radius=2pt];
    \draw[fill] (2,-1) circle [radius=2pt];
    \draw[fill] (2,1) circle [radius=2pt];
    \draw[fill] (1,2) circle [radius=2pt];
    \end{tikzpicture}
\caption{Building block for the telescope construction with $i=0,1,\dots,(n_k-1)$.}
\label{fig:Y1}
\end{center}
\end{figure}
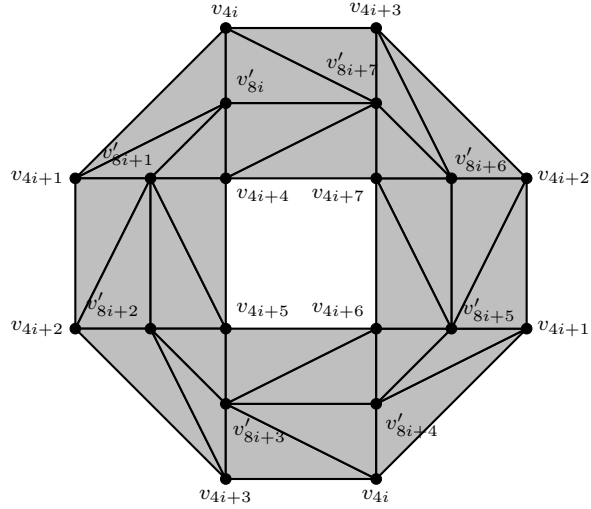
We construct $Y_1$ by identifying edges and vertices of these $n_k$ building blocks as labeled.  The underlying vertex set is $V(Y_1) = \{v_0,v_1,v_2,\dots,v_{4n_k+3},v'_0,v'_1,\dots,v'_{8n_k-1}\}$, so we have $|V(Y_1)|=12n_k+4$.  Since each building block has $44$ edges, $4$ of which are glued to the next building block, and $28$ faces, we have $|E(Y_1)|=40n_k+4$ and $|F(Y_1)|=28n_k$. In addition, observe that the vertices of highest degree are those in the squares in the ``middle'' of the telescope, such as vertex $v_4$ when $n_k\geq 2$. In this case, $v_4$ is adjacent to $v_5, v_7, v'_0, v'_1, v'_7, v'_8, v'_{15}, v'_{11},$ and $v'_{12}$, so $\deg(v_4)=9$. By the symmetry of $Y_1$, we have that $\maxdeg(Y_1)=9$ when $n_k\geq 2$, and $\maxdeg(Y_1)=6$ when $n_k=1$ (when $m=2$ or $3$).

To compute $H_1(Y_1)$, we simply apply the identical argument from~\cite{newman}. We order the vertices in the natural way, where $v_j>v_k$ if $j>k$, similarly for the $v_\ell'$, and where $v'_{\ell} > v_j$ for all $\ell,j$.  We let these vertex orderings induce orientations on the edges and faces of $Y_1$. For each $i=0,\dots,n_k$, denote by $\gamma_i$ the 1-cycle of $Y_1$ represented by $[v_{4i},v_{4i+1}]+[v_{4i+1},v_{4i+2}]+[v_{4i+2},v_{4i+3}]-[v_{4i},v_{4i+3}]$. Then $2\gamma_i - \gamma_{i+1}$ is a 1-boundary of $Y_1$ for each $i=0,\dots,(n_k-1)$, and, as in Newman's construction, we have that $H_1(Y_1)$ can be presented as $\la \gamma_0,\gamma_1,\dots,\gamma_{n_k}\; |\; 2\gamma_0=\gamma_1, 2\gamma_1=\gamma_2,\dots, 2\gamma_{n_k-1}=\gamma_{n_k} \ra$.

\subsection{The sphere construction}
The sphere part $Y_2$ is a flag triangulation of the sphere $S^2$ that has $k$ square holes such that the squares are all vertex disjoint and nonadjacent. Our $Y_2$ will be homeomorphic to the $Y_2$ that Newman constructs in \cite{newman} for the $d=2$ case, but our construction involves a few different steps. First, we will show that for any integer $k\geq 1$, there exists a flag triangulation $T_i$ of $S^2$ (here $i=\lfloor \frac{k-1}{4}\rfloor$) with at least $k$ faces such that $\maxdeg(T_i)\leq 6$. Then, we will insert square holes on $k$ of the faces of $T_i$, while subdividing the edges, and call the resulting flag complex $\widetilde{T_i}$. Finally, we describe a process to replace each vertex of degree 14 in $\widetilde{T_i}$ with two degree 9 vertices so that the resulting complex, $Y_2$, has $\maxdeg(Y_2)\leq 12$. Throughout these constructions, we will have four cases corresponding to the value of $k \mod 4$, and we carefully keep track of the degrees of each vertex in $T_i$, $\widetilde{T_i}$, and $Y_2$ for each case.

\subsubsection{$T_i$ and flag bistellar 0-moves}
We begin by constructing an infinite sequence $T_0, T_1, \dots$ of flag triangulations of $S^2$ such that $\maxdeg(T_i)\leq 6$ for all $i$. To do so, we adapt the bistellar 0-moves used in \cite[Lemma~5.6]{newman}. Let $T_0$ be the $3$-simplex boundary on the vertex set $\{w_0, w_1, w_2, w_3\}$. Note that each vertex of $T_0$ has degree 3. We will construct the remaining $T_i$ inductively. To build $T_1$, first remove the face $[w_1, w_2, w_3]$ and edge $[w_1, w_3]$. Then, add two new vertices $w_4$ and $w_5$ as well as new edges $[w_0, w_4], [w_1, w_4], [w_3, w_4], [w_1, w_5],$ $[w_2, w_5], [w_3, w_5],$ and $[w_4, w_5]$. Taking the clique complex will then give $T_1$.  See Figure~\ref{fig:triangulations}.

Essentially, this process is the same as making the face $[w_1,w_2,w_3]$ into a square face $[w_1,w_2,w_3,w_4]$, removing that square face, taking the cone over it, and then ensuring that the resulting complex is a flag triangulation of $S^2$. We will call such a move a \textbf{flag bistellar 0-move}. Each $T_{i+1}$ for $i\geq 0$ will be obtained from $T_i$ by performing a flag bistellar 0-move on the face $[w_{2i+1}, w_{2i+2}, w_{2i+3}]$ of $T_i$. Explicitly, to construct $T_{i+1}$, remove the face $[w_{2i+1}, w_{2i+2}, w_{2i+3}]$ and the edge $[w_{2i+1}, w_{2i+3}]$. Then, add new vertices $w_{2i+4}$ and $w_{2i+5}$ and new edges $[w_{2i}, w_{2i+4}], [w_{2i+1}, w_{2i+4}], [w_{2i+3}, w_{2i+4}], [w_{2i+1},w_{2i+5}],$ $[w_{2i+2}, w_{2i+5}], [w_{2i+3}, w_{2i+5}],$ $[w_{2i+4}, w_{2i+5}],$ and take the clique complex to get $T_{i+1}$. Note that each flag bistellar 0-move adds 2 vertices, 6 edges, and 4 faces. Since $|V(T_0)|=4, |E(T_0)|=6$, and $|F(T_0)|=4$, this means that $|V(T_i)|=2i+4$, $|E(T_i)|=6i+6$, and $|F(T_i)|=4i+4$.

Further, Table \ref{tab:Ti_vertices} summarizes the degrees of the vertices in each $T_i$.  
\begin{table}[ht]
\begin{center}
    \begin{tabular}{|c|c|c|}
    \hline
    $T_i$ & Degree & Vertices \\
    \hline
    $T_0$ & 3 & $w_0, w_1, w_2, w_3$ \\
    \hline
    $T_1$ & 4 & $w_0, w_1, w_2, w_3, w_5, w_6$ \\
    \hline
    $T_2$ & 4 &  $w_0, w_1, w_6, w_7$\\
     & 5 & $w_2, w_3, w_4, w_5$\\ 
     \hline
    $T_i$ & 4 &  $w_0, w_1, w_{2i+2}, w_{2i+3}$\\
    $i\geq 3$ & 5 & $w_2, w_3, w_{2i}, w_{2i+1}$\\
    & 6 & $w_4, \dots, w_{2i-1}$\\
    \hline
    \end{tabular}
\end{center}
\caption{Degrees of the vertices in $T_i$.}
\label{tab:Ti_vertices}
\end{table}
To compute the degrees of vertices in $T_i$ for $i\geq 3$, observe that when the new vertices $w_{2i+2}$ and $w_{2i+3}$ are added, they have degree $4$ in $T_i$. For each of the next two iterations of the flag bistellar-0 move, the degree of these vertices increases by one, resulting in degree 6 in $T_{i+2}$. In the remaining triangulations $T_j$ with $j\geq i+3$, these vertices are not affected. Therefore, $\maxdeg(T_i)\leq 6$ for each $i$.

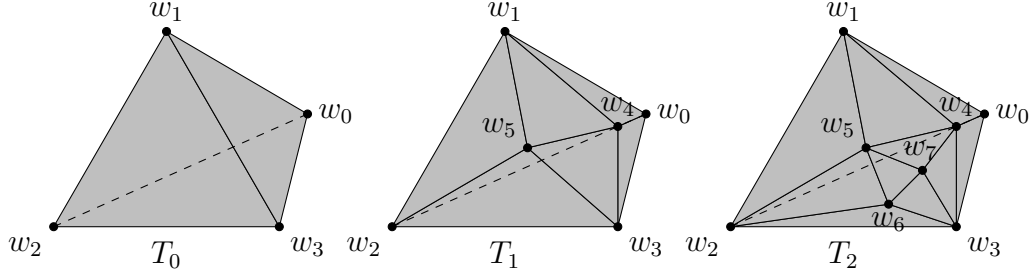
\begin{figure}
    \centering
    \begin{tikzpicture}[scale=1.5]
    \draw [fill=lightgray] (-1,0)--(1,0)--(0,1.732)--(-1,0);
    \draw [fill=lightgray] (1,0)--(1.25,1)--(0,1.732)--(1,0);
    \draw [dashed] (-1,0)--(1.25,1);
    
    \draw[fill] (-1,0) circle [radius=1pt];
    \draw[fill] (1,0) circle [radius=1pt];
    \draw[fill] (0,1.732) circle [radius=1pt];
    \draw[fill] (1.25,1) circle [radius=1pt];

    \node [above] at (0,1.732) {$w_1$};
    \node [below left] at (-1,0) {$w_2$};
    \node [below right] at (1,0) {$w_3$};
    \node [right] at (1.25,1) {$w_0$};
    \node at (0, -0.25) {$T_0$};
    
    \draw [fill=lightgray] (2,0)--(4,0)--(3.2,0.7)--(2,0);
    \draw [fill=lightgray] (2,0)--(3.2,0.7)--(3,1.732)--(2,0);
    \draw [fill=lightgray] (4,0)--(4.25,1)--(4,0.889)--(4,0);
    \draw [fill=lightgray] (4,0)--(4,0.889)--(3.2,0.7)--(4,0);
    \draw [fill=lightgray] (3.2,0.7)--(4,0.889)--(3,1.732)--(3.2,0.7);
    \draw [fill=lightgray] (4,0.889)--(4.25,1)--(3,1.732)--(4,0.889);
    \draw [dashed] (2,0)--(4.25,1);
    
    \draw[fill] (2,0) circle [radius=1pt];
    \draw[fill] (4,0) circle [radius=1pt];
    \draw[fill] (3,1.732) circle [radius=1pt];
    \draw[fill] (4.25,1) circle [radius=1pt];
    \draw[fill] (3.2,0.7) circle [radius=1pt];
    \draw[fill] (4,0.889) circle [radius=1pt];

    \node [above] at (3,1.732) {$w_1$};
    \node [below left] at (2,0) {$w_2$};
    \node [below right] at (4,0) {$w_3$};
    \node [right] at (4.25,1) {$w_0$};
    \node [above left] at (3.2,0.7) {$w_5$};
    \node [above] at (4,0.889) {$w_4$};
    \node at (3, -0.25) {$T_1$};
    
    \draw [fill=lightgray] (5,0)--(7,0)--(6.4,0.2)--(5,0);
    \draw [fill=lightgray] (5,0)--(6.4,0.2)--(6.2,0.7)--(5,0);
    \draw [fill=lightgray] (5,0)--(6.2,0.7)--(6,1.732)--(5,0);
    \draw [fill=lightgray] (7,0)--(7.25,1)--(7,0.889)--(7,0);
    \draw [fill=lightgray] (6.2,0.7)--(7,0.889)--(6,1.732)--(6.2,0.7);
    \draw [fill=lightgray] (7,0.889)--(7.25,1)--(6,1.732)--(7,0.889);
    \draw [fill=lightgray] (7,0)--(6.7,0.5)--(6.4,0.2)--(7,0);
    \draw [fill=lightgray] (6.4,0.2)--(6.7, 0.5)--(6.2, 0.7)--(6.4, 0.2);
    \draw [fill=lightgray] (7,0)--(7,0.889)--(6.7, 0.5)--(7,0);
    \draw [fill=lightgray] (6.7,0.5)--(7,0.889)--(6.2,0.7)--(6.7,0.5);
    \draw [dashed] (5,0)--(7.25,1);
    
    \draw[fill] (5,0) circle [radius=1pt];
    \draw[fill] (7,0) circle [radius=1pt];
    \draw[fill] (6,1.732) circle [radius=1pt];
    \draw[fill] (7.25,1) circle [radius=1pt];
    \draw[fill] (6.2,0.7) circle [radius=1pt];
    \draw[fill] (7,0.889) circle [radius=1pt];
    \draw[fill] (6.4,0.2) circle [radius=1pt];
    \draw[fill] (6.7, 0.5) circle [radius=1pt];

    \node [above] at (6,1.732) {$w_1$};
    \node [below left] at (5,0) {$w_2$};
    \node [below right] at (7,0) {$w_3$};
    \node [right] at (7.25,1) {$w_0$};
    \node [above left] at (6.2,0.7) {$w_5$};
    \node [above] at (7,0.889) {$w_4$};
    \node [below] at (6.4, 0.22) {$w_6$};
    \node [above] at (6.68,0.5) {$w_7$};
    \node at (6, -0.25) {$T_2$};
    
    \end{tikzpicture}
    \caption{The first few flag triangulations of $S^2$ using flag bistellar 0-moves.}
    \label{fig:triangulations}
\end{figure}

From this infinite sequence of flag triangulations of $S^2$ with bounded degree, we are interested in the particular $T_i$ with $i=\lfloor \frac{k-1}{4} \rfloor$ to use in our construction of $Y_2$, where $k$ is the Hamming weight of $m$ as in Notation~\ref{notation:hamming etc}. Note that this $T_i$ has vertex set $\{w_0,\dots,w_{2i+3}\}$ and has $4\lfloor \frac{k-1}{4}\rfloor+4$ faces. 
Let $\delta$ be the integer $0\leq \delta \leq 3$ where $\delta \equiv -k \mod 4$. Then $T_i$ has exactly $k+\delta$ faces.

\subsubsection{Constructing $\widetilde{T_i}$}
Next, we insert square holes in the first $k$ faces of $T_i$ and subdivide the remaining faces in such a way that the squares will be vertex disjoint and nonadjacent. 

First, we will insert square holes in $k$ of the faces of $T_i$, making sure to triangulate the resulting faces and take the clique complex so that our simplicial complex remains flag. Let $[w_r,w_s,w_t]$ with $r<s<t$ be the $j$th of these $k$ faces with respect to a fixed ordering of the faces (where $j$ ranges from 1 to $k$). We remove this face and subdivide the edges by adding new vertices $w'_{r,s}, w'_{r,t},$ and $w'_{s,t}$ and new edges $[w_r, w'_{r,s}], [w_s, w'_{r,s}], [w_r, w'_{r,t}], [w_t, w'_{r,t}],$ $[w_s, w'_{s,t}],$ and $[w_t, w'_{s,t}]$. Then, we add vertices $u_{4j-4}, u_{4j-3}, u_{4j-2},$ and $u_{4j-1}$ to form a square inside the original face with indices increasing counterclockwise. Moreover, we add edges
\begin{align*}
    & [w_r, u_{4j-4}], [w_r, u_{4j-1}], [u_{4j-4}, w'_{r,s}], [u_{4j-3}, w'_{r,s}], [w_s, u_{4j-3}] \\
    & [u_{4j-3}, w'_{s,t}], [u_{4j-2}, w'_{s,t}], [w_t, u_{4j-2}], [u_{4j-2}, w'_{r,t}], [u_{4j-1}, w'_{r,t}].
\end{align*}
\noindent After applying this process,
we take the clique complex. The result of this operation on face $[w_r, w_s, w_t]$ is depicted in Figure \ref{fig:Y2_faces} (left).

The remaining $\delta$ faces of $T_i$ will simply be subdivided and triangulated before taking the clique complex. Explicitly, this means that after removing the face $[w_{2i+1},w_{2i+2},w_{2i+3}]$ and its edges, we add vertices $w'_{2i+1,2i+2}, w'_{2i+1,2i+3},$ and $w'_{2i+2,2i+3}$ and edges 
\begin{align*}
&[w_{2i+1}, w'_{2i+1,2i+2}], [w_{2i+2}, w'_{2i+1,2i+2}], [w_{2i+1}, w'_{2i+1,2i+3}],\\
&[w_{2i+3}, w'_{2i+1,2i+3}], [w'_{2i+1,2i+2}, w'_{2i+1,2i+3}], [w_{2i+2}, w'_{2i+2,2i+3}],\\
&[w_{2i+3}, w'_{2i+2,2i+3}], [w'_{2i+1,2i+2}, w'_{2i+2,2i+2}], [w'_{2i+1,2i+3}, w'_{2i+2,2i+3}].
\end{align*}
This subdivision of face $[w_{2i+1},w_{2i+2},w_{2i+3}]$ is shown in Figure \ref{fig:Y2_faces} (right). We do similarly for the faces $[w_{2i-1},w_{2i+2}, w_{2i+3}]$ and $[w_{2i}, w_{2i+1}, w_{2i+3}]$, if necessary. The clique complex of this construction is a flag complex which is homeomorphic to $S^2$ with $k$ distinct points removed. Call this complex $\widetilde{T_i}$.

\begin{figure}
    \centering
    \begin{tikzpicture}[scale=2.8, decoration={markings,mark=at position 0.5 with {\arrow{latex}}}]
    \draw [fill=lightgray] (-1,0)--(0,0)--(-0.2,0.667)--(-1,0);
    \draw [fill=lightgray] (-1,0)--(-0.2,0.667)--(-0.5,0.866)--(-1,0);
    \draw [fill=lightgray] (-0.5,0.866)--(-0.2,0.667)--(-0.2,1.066)--(-0.5,0.866);
    \draw [fill=lightgray] (-0.5,0.866)--(-0.2,1.066)--(0,1.732)--(-0.5,0.866);
    \draw [fill=lightgray] (0,1.732)--(-0.2,1.066)--(0.2,1.066)--(0,1.732);
    \draw [fill=lightgray] (0.5,0.866)--(0.2,1.066)--(0,1.732)--(0.5,0.866);
    \draw [fill=lightgray] (0.5,0.866)--(0.2,0.667)--(0.2,1.066)--(0.5,0.866);
    \draw [fill=lightgray] (1,0)--(0.2,0.667)--(0.5,0.866)--(1,0);
    \draw [fill=lightgray] (1,0)--(0,0)--(0.2,0.667)--(1,0);
    \draw [fill=lightgray] (0,0)--(-0.2,0.667)--(0.2,0.667)--(0,0);
    
    \draw[postaction={decorate}, thick] (0,1.723)--(-0.5,0.866);
    \draw[postaction={decorate}, thick] (0,1.723)--(-0.2,1.066);
    \draw[postaction={decorate}, thick] (-0.2,1.066)--(-0.5,0.866);
    \draw[postaction={decorate}, thick] (-0.2,0.667)--(-0.5,0.866);
    \draw[postaction={decorate}, thick] (-0.2,1.066)--(-0.2,0.667);
    \draw[postaction={decorate}, thick] (-1,0)--(-0.2,0.667);
    \draw[postaction={decorate}, thick] (-1,0)--(-0.5,0.866);
    \draw[postaction={decorate}, thick] (-1,0)--(0,0);
    \draw[postaction={decorate}, thick] (-0.2,0.667)--(0,0);
    \draw[postaction={decorate}, thick] (1,0)--(0,0);
    \draw[postaction={decorate}, thick] (0.2,0.667)--(0,0);
    \draw[postaction={decorate}, thick] (1,0)--(0.2,0.667);
    \draw[postaction={decorate}, thick] (1,0)--(0.5, 0.866);
    \draw[postaction={decorate}, thick] (0.2,0.667)--(0.5, 0.866);
    \draw[postaction={decorate}, thick] (0.2,1.066)--(0.5, 0.866);
    \draw[postaction={decorate}, thick] (0,1.723)--(0.2,1.066);
    \draw[postaction={decorate}, thick] (0,1.723)--(0.5, 0.866);
    \draw[postaction={decorate}, thick] (0.2,0.667)--(0.2,1.066);
    \draw[postaction={decorate}, thick] (-0.2,1.066)--(0.2,1.066);
    \draw[postaction={decorate}, thick] (-0.2,0.667)--(0.2,0.667);

    \draw [fill=lightgray] (2,0)--(3,0)--(2.5,0.866)--(2,0);
    \draw [fill=lightgray] (3,0)--(2.5,0.866)--(3.5,0.866)--(3,0);
    \draw [fill=lightgray] (4,0)--(3.5,0.866)--(3,0)--(4,0);
    \draw [fill=lightgray] (2.5,0.866)--(3.5,0.866)--(3,1.732)--(2.5,0.866);
    
    \draw[postaction={decorate}, thick] (3,1.723)--(2.5,0.866);
    \draw[postaction={decorate}, thick] (2,0)--(2.5,0.866);
    \draw[postaction={decorate}, thick] (2,0)--(3,0);
    \draw[postaction={decorate}, thick] (4,0)--(3,0);
    \draw[postaction={decorate}, thick] (3.5, 0.866)--(3,0);
    \draw[postaction={decorate}, thick] (3,1.723)--(3.5, 0.866);
    \draw[postaction={decorate}, thick] (2.5, 0.866)--(3,0);
    \draw[postaction={decorate}, thick] (4, 0)--(3.5,0.866);
    \draw[postaction={decorate}, thick] (2.5, 0.866)--(3.5,0.866);
    
    \draw[fill] (-1,0) circle [radius=1pt];
    \draw[fill] (0,0) circle [radius=1pt];
    \draw[fill] (1,0) circle [radius=1pt];
    \draw[fill] (0,1.732) circle [radius=1pt];
    \draw[fill] (-0.5,0.866) circle [radius=1pt];
    \draw[fill] (0.5,0.866) circle [radius=1pt];
    \draw[fill] (-0.2,1.066) circle [radius=1pt];
    \draw[fill] (0.2,1.066) circle [radius=1pt];
    \draw[fill] (-0.2,0.667) circle [radius=1pt];
    \draw[fill] (0.2,0.667) circle [radius=1pt];
    
    \draw[fill] (2,0) circle [radius=1pt];
    \draw[fill] (3,0) circle [radius=1pt];
    \draw[fill] (4,0) circle [radius=1pt];
    \draw[fill] (3,1.732) circle [radius=1pt];
    \draw[fill] (2.5,0.866) circle [radius=1pt];
    \draw[fill] (3.5,0.866) circle [radius=1pt];
    
    \node [above] at (0,1.732) {$w_r$};
    \node [below left] at (-1,0) {$w_s$};
    \node [below right] at (1,0) {$w_t$};
    \node [left] at (-0.5,0.866) {$w'_{r,s}$};
    \node [below] at (0,0) {$w'_{s,t}$};
    \node [right] at (0.5,0.866) {$w'_{r,t}$};
    \node [above right] at (-0.2,1.06) {$\scriptstyle u_{4j-4}$};
    \node [below right] at (-0.2,0.68) {$\scriptstyle u_{4j-3}$};
    \node [above left] at (0.22,0.63) {$\scriptstyle u_{4j-2}$};
    \node [below left] at (0.22,1.08) {$\scriptstyle u_{4j-1}$};
    
    \node [above] at (3,1.732) {$w_{2i+1}$};
    \node [below left] at (2,0) {$w_{2i+2}$};
    \node [below right] at (4,0) {$w_{2i+3}$};
    \node [left] at (2.5,0.866) {$w'_{2i+1,2i+2}$};
    \node [below] at (3,0) {$w'_{2i+2,2i+3}$};
    \node [right] at (3.5,0.866) {$w'_{2i+1,2i+3}$};
    
    \end{tikzpicture}
    \caption{Example of square insertion done on $k$ faces of $T_i$ (left), and subdivided triangulation on remaining faces (right).}
    \label{fig:Y2_faces}
\end{figure}
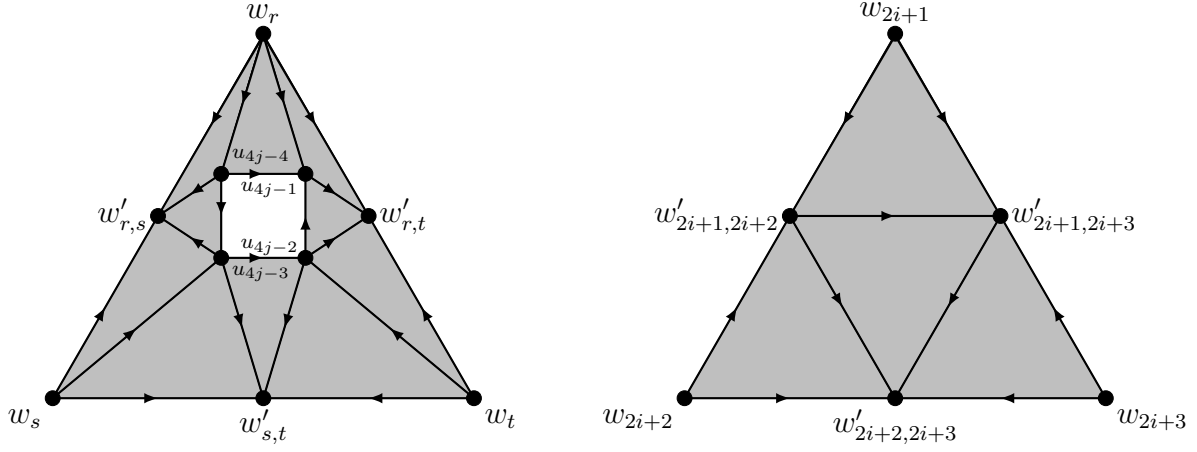

Let's consider the degrees of the vertices of $\widetilde{T_i}$. We have that $\deg(w'_{s,t})=6$ for all $s,t$ and $\deg(u_\ell)\in \{4,5\}$ for all $\ell$, where the ``top'' $u_\ell$ have degree 4 and the ``bottom'' $u_\ell$ have degree 5. To determine the degrees of the $w_j$ vertices, we need to consider their degrees in $T_i$ and how their degrees increase during the subdivision and square face removal processes. As we are interested in bounding the maximum degree of the vertices of $\widetilde{T_i}$, we need only consider the case when $\delta=0$ and all $k$ faces of $T_i$ have a square hole.
\begin{table}[ht]
\begin{center}
    \begin{tabular}{|c|c|c|}
    \hline
    $\widetilde{T_i}$ & Degree & Vertices \\
    \hline
     & 6 & $w_2, w_3$ \\
    $\td{T_0}$ & 7 & $w_1$\\
    $(k=4)$& 9 & $w_0$\\
    \hline
     & 8 & $w_4, w_5$\\ 
    $\td{T_1}$ & 9 & $w_2, w_3$ \\
    $(k=8)$& 10 & $w_1$\\
    & 12 & $w_0$\\
    \hline
    & 8 & $w_6, w_7$\\ 
    $\td{T_2}$ & 10 & $w_1$ \\
    $(k=12)$ & 11 & $w_4, w_5$\\
    & 12 & $w_0, w_2, w_3$\\
    \hline
     & 8 & $w_{2i+2}, w_{2i+3}$\\
    $\widetilde{T_i}$ & 10 & $w_1$ \\
    $i\geq 3$ & 11 & $w_{2i}, w_{2i+1}$\\
    $(k=4i+4)$& 12 & $w_0, w_2, w_3$\\
    & 14 & $w_4, \dots, w_{2i-1}$\\
    \hline
    \end{tabular}
\end{center}
\caption{Degrees of the vertices in $\widetilde{T_i}$ when $k\equiv 0 \mod 4$.}
\label{tab:Ti_tilde}
\end{table}
Table \ref{tab:Ti_tilde} gives the degrees of each of the $w_j$ vertices in $\widetilde{T_i}$ when $\delta=0$.

To verify the degrees of the $w_j$ in $\widetilde{T_i}$ when $i\geq 3$, we consider how the degrees of the vertices change as $i$ increases. Between $\td{T}_{i-1}$ and $\widetilde{T_i}$ (with $\delta=0$ for both), the only vertices that change degree are $w_{2i-2}, w_{2i-1}, w_{2i}, w_{2i+1}$, each of which increase degree by 3. This is because they each get one new edge from the $T_i$ flag bistellar 0-move and two new edges from the square removal triangulation process (since each vertex is the smallest indexed and hence the ``top'' vertex of one new triangular face). Further, the new vertices $w_{2i+2}, w_{2i+3}$ in $\widetilde{T_i}$ have degree 8, and they increase degree by 3 in the next two iterations, resulting in degree 14 in $\td{T}_{i+2}$ and all future iterations.

The above argument shows that regardless of $m$ and $k$, $\maxdeg(\widetilde{T_i})\leq 14$, where $i=\lfloor \frac{k-1}{4} \rfloor$. Furthermore, the only vertices that could have degree 14 are $w_4,\dots, w_{2i-1}$, each of which is separated from the others by a $w'_{s,t}$ vertex, which only has degree 6. We want to know exactly which vertices in $\widetilde{T_i}$ have degree 14, for all possible $k$ with $i\geq 3$, because we plan to alter these vertices to decrease $\maxdeg(\widetilde{T_i})$. Note that as $\delta$ increases from 0 to 3, the degree of each $w_j$ vertex is nonincreasing. When $k=4i+4$ and $\delta=0$, Table \ref{tab:Ti_tilde} gives that $w_4,\dots, w_{2i-1}$ have degree 14. When $k=4i+3$ and $\delta=1$, the face $[w_{2i+1},w_{2i+2},w_{2i+3}]$ is subdivided instead of having a square removed, but this does not change the degrees of $w_4,\dots, w_{2i-1}$, so these all still have degree 14. When $k=4i+2$ and $\delta=2$, the faces $[w_{2i+1},w_{2i+2},w_{2i+3}]$ and $[w_{2i-1},w_{2i+2},w_{2i+3}]$ are subdivided. Therefore, $w_{2i-1}$ has two fewer edges than in the previous case since $w_{2i-1}$ is the smallest indexed vertex in $[w_{2i-1}, w_{2i+2},w_{2i+3}]$ and so would have two ``top'' $u_\ell$ adjacent to it if this face had a square removed from it. So, in this case, $w_4,\dots, w_{2i-2}$ have degree 14 and $w_0,w_2, w_3, w_{2i-1}$ have degree 12 in $\widetilde{T_i}$. Finally, if $k=4i+1$ and $\delta=3$, then additionally the face $[w_{2i},w_{2i+1},w_{2i+3}]$ is subdivided, which means that the degree 12 and 14 vertices are the same as in the previous case.

\subsubsection{Replacing degree 14 vertices to construct $Y_2$}
Having identified the vertices of $\widetilde{T_i}$ of the highest degree, we now describe a process by which we will replace each vertex of degree 14 by two vertices of degree 9 in order to ensure that $\maxdeg(\widetilde{T_i})\leq 12$ for all $k$ (and $i$). The resulting flag complex, given by taking the clique complex of this construction, will be the final $Y_2$, and it will be homeomorphic to $\widetilde{T_i}$.  The process is summarized by Figure~\ref{fig:Replacing_vertex} and described in detail in the following paragraphs. 

Suppose $w_j$ is a vertex of degree 14 in $\widetilde{T_i}$. Locally, on a small neighborhood of $w_j$, $\widetilde{T_i}$ is homeomorphic to a $2$-manifold. Since $\deg(w_j)=14$, $w_j$ is surrounded by six triangular faces coming from $T_i$, all of which have had a square removed. By our construction, two of these squares (which are in adjacent triangular faces) have both of their ``top'' $u_\ell$ vertices connected to $w_j$, but the other four squares just have a single edge connecting one of their ``bottom'' $u_\ell$ vertices to $w_j$. So, $w_j$ has six $w'_{s,t}$ neighbors and eight $u_\ell$ neighbors, which form a 14-sided polygon with $w_j$ as its ``star'' point. Choose two $w'_{s,t}$ vertices which are across from each other in this 14-sided polygon, say $w'_{a,b}$ and $w'_{c,d}$. Next, we will remove $w_j$ and all of the 14 faces that it is contained in. Then, we add vertices $w_{j_1}$ and $w_{j_2}$ in place of $w_j$ and add edges in such a way that $\deg(w_{j_1})=\deg(w_{j_2})=9$, there are edges $[w_{j_1},w_{j_2}], [w_{j_1},w'_{a,b}], [w_{j_1},w'_{c,d}], [w_{j_2},w'_{a,b}],$ and $[w_{j_2},w'_{c,d}]$, and the 14-sided polygon is triangulated with 16 triangles. This process only changes the degree of $w'_{a,b}$ and $w'_{c,d}$, each of which now have degree 7. Therefore, the maximum degree of $w_{j_1}, w_{j_2}$, and the 14 vertices in the polygon is 9 (since $\deg(u_\ell)\in \{4,5\}$ and $\deg(w'_{s,t})=6$). To illustrate this construction, we consider the case when $k=20$. Then $i=4$, $\delta=0$, and $\deg(w_7)=14$ in $\td{T_4}$. Figure \ref{fig:Replacing_vertex} depicts this process when $w'_{a,b}=w'_{3,7}$ and $w'_{c,d}=w'_{7,11}$.

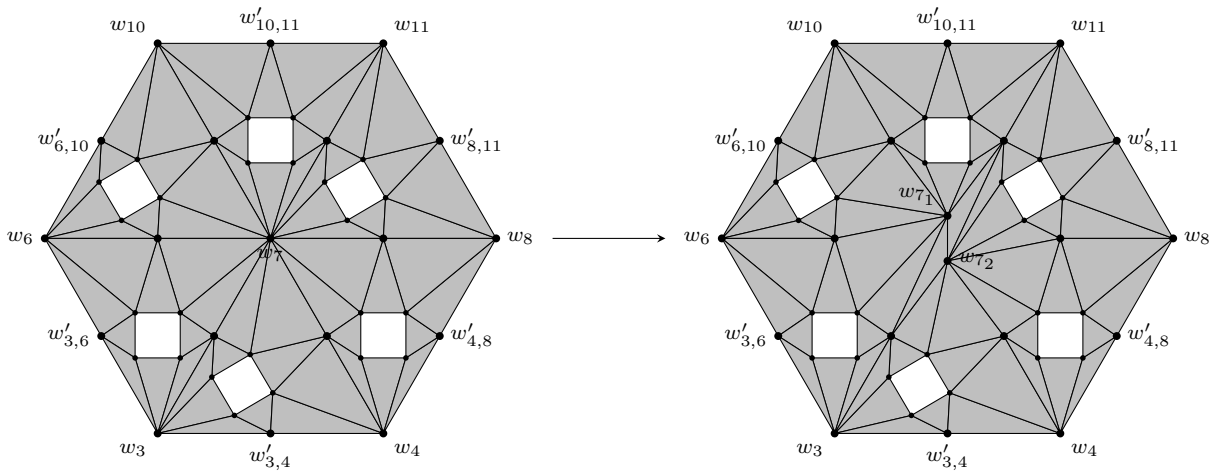
\begin{figure}
\begin{center}
\begin{tikzpicture}[scale=1.5]
    \draw [fill=lightgray] (0,0)--(0.2,0.67)--(-0.2,0.67)--(0,0);
    \draw [fill=lightgray] (0,0)--(0.2,0.67)--(0.5,0.866)--(0,0);
    \draw [fill=lightgray] (0.2,0.67)--(0.5,0.866)--(0.2, 1.07)--(0.2,0.67);
    \draw [fill=lightgray] (0.5,0.866)--(0.2, 1.07)--(1,1.73)--(0.5,0.866);
    \draw [fill=lightgray] (0.2, 1.07)--(1,1.73)--(0,1.73)--(0.2,1.07);
    \draw [fill=lightgray] (0,1.73)--(0.2,1.07)--(-0.2,1.07)--(0,1.73);
    \draw [fill=lightgray] (-0.2,1.07)--(0,1.73)--(-1,1.73)--(-0.2,1.07);
    \draw [fill=lightgray] (-1,1.73)--(-0.2,1.07)--(-0.5,0.866)--(-1,1.73);
    \draw [fill=lightgray] (-0.2,1.07)--(-0.5,0.866)--(-0.2,0.67)--(-0.2,1.07);
    \draw [fill=lightgray] (-0.5,0.866)--(-0.2,0.67)--(0,0)--(-0.5, 0.866);
    \draw [fill=lightgray] (0,0)--(0.68, 0.16)--(0.48, 0.5)--(0,0);
    \draw [fill=lightgray] (0,0)--(0.68, 0.16)--(1,0)--(0,0);
    \draw [fill=lightgray] (0.68, 0.16)--(1,0)--(1.02, 0.36)--(0.68, 0.16);
    \draw [fill=lightgray] (1,0)--(1.02, 0.36)--(2,0)--(1,0);
    \draw [fill=lightgray] (1.02, 0.36)--(2,0)--(1.5,0.866)--(1.02,0.36);
    \draw [fill=lightgray] (1.5,0.866)--(1.02,0.36)--(0.82,0.7)--(1.5,0.866);
    \draw [fill=lightgray] (0.82,0.7)--(1.5,0.866)--(1,1.73)--(0.82,0.7);
    \draw [fill=lightgray] (1,1.73)--(0.82,0.7)--(0.5,0.866)--(1,1.73);
    \draw [fill=lightgray] (0.82,0.7)--(0.5,0.866)--(0.48, 0.5)--(0.82,0.7);
    \draw [fill=lightgray] (0.5,0.866)--(0.48, 0.5)--(0,0)--(0.5, 0.866);
    \draw [fill=lightgray] (-2,0)--(-1.32, 0.16)--(-1.52, 0.5)--(-2,0);
    \draw [fill=lightgray] (-2,0)--(-1.32, 0.16)--(-1,0)--(-2,0);
    \draw [fill=lightgray] (-1.32, 0.16)--(-1,0)--(-0.98, 0.36)--(-1.32, 0.16);
    \draw [fill=lightgray] (-1,0)--(-0.98, 0.36)--(0,0)--(-1,0);
    \draw [fill=lightgray] (-0.98, 0.36)--(0,0)--(-0.5,0.866)--(-0.98,0.36);
    \draw [fill=lightgray] (-0.5,0.866)--(-0.98,0.36)--(-1.18,0.7)--(-0.5,0.866);
    \draw [fill=lightgray] (-1.18,0.7)--(-0.5,0.866)--(-1,1.73)--(-1.18,0.7);
    \draw [fill=lightgray] (-1,1.73)--(-1.18,0.7)--(-1.5,0.866)--(-1,1.73);
    \draw [fill=lightgray] (-1.18,0.7)--(-1.5,0.866)--(-1.52, 0.5)--(-1.18,0.7);
    \draw [fill=lightgray] (-1.5,0.866)--(-1.52, 0.5)--(-2,0)--(-1.5, 0.866);
    \draw [fill=lightgray] (-1,-1.73)--(-0.8,-1.06)--(-1.2,-1.06)--(-1,-1.73);
    \draw [fill=lightgray] (-1,-1.73)--(-0.8,-1.06)--(-0.5,-0.866)--(-1,-1.73);
    \draw [fill=lightgray] (-0.8,-1.06)--(-0.5,-0.866)--(-0.8, -0.66)--(-0.8,-1.06);
    \draw [fill=lightgray] (-0.5,-0.866)--(-0.8, -0.66)--(0,0)--(-0.5,-0.866);
    \draw [fill=lightgray] (-0.8, -0.66)--(0,0)--(-1,0)--(-0.8,-0.66);
    \draw [fill=lightgray] (-1,0)--(-0.8,-0.66)--(-1.2,-0.66)--(-1,0);
    \draw [fill=lightgray] (-1.2,-0.66)--(-1,0)--(-2,0)--(-1.2,-0.66);
    \draw [fill=lightgray] (-2,0)--(-1.2,-0.66)--(-1.5,-0.866)--(-2,0);
    \draw [fill=lightgray] (-1.2,-0.66)--(-1.5,-0.866)--(-1.2,-1.06)--(-1.2,-0.66);
    \draw [fill=lightgray] (-1.5,-0.866)--(-1.2,-1.06)--(-1,-1.73)--(-1.5, -0.866);
    \draw [fill=lightgray] (-1,-1.73)--(-0.32, -1.57)--(-0.52, -1.23)--(-1,-1.73);
    \draw [fill=lightgray] (-1,-1.73)--(-0.32, -1.57)--(0,-1.73)--(-1,-1.73);
    \draw [fill=lightgray] (-0.32, -1.57)--(0,-1.73)--(0.02, -1.37)--(-0.32, -1.57);
    \draw [fill=lightgray] (0,-1.73)--(0.02, -1.37)--(1,-1.73)--(0,-1.73);
    \draw [fill=lightgray] (0.02, -1.37)--(1,-1.73)--(0.5,-0.866)--(0.02,-1.37);
    \draw [fill=lightgray] (0.5,-0.866)--(0.02,-1.37)--(-0.18,-1.03)--(0.5,-0.866);
    \draw [fill=lightgray] (-0.18,-1.03)--(0.5,-0.866)--(0,0)--(-0.18,-1.03);
    \draw [fill=lightgray] (0,0)--(-0.18,-1.03)--(-0.5,-0.866)--(0,0);
    \draw [fill=lightgray] (-0.18,-1.03)--(-0.5,-0.866)--(-0.52, -1.23)--(-0.18,-1.03);
    \draw [fill=lightgray] (-0.5,-0.866)--(-0.52, -1.23)--(-1,-1.73)--(-0.5, -0.866);
    \draw [fill=lightgray] (1,-1.73)--(1.2,-1.06)--(0.8,-1.06)--(1,-1.73);
    \draw [fill=lightgray] (1,-1.73)--(1.2,-1.06)--(1.5,-0.866)--(1,-1.73);
    \draw [fill=lightgray] (1.2,-1.06)--(1.5,-0.866)--(1.2, -0.66)--(1.2,-1.06);
    \draw [fill=lightgray] (1.5,-0.866)--(1.2, -0.66)--(2,0)--(1.5,-0.866);
    \draw [fill=lightgray] (1.2, -0.66)--(2,0)--(1,0)--(1.2,-0.66);
    \draw [fill=lightgray] (1,0)--(1.2,-0.66)--(0.8,-0.66)--(1,0);
    \draw [fill=lightgray] (0.8,-0.66)--(1,0)--(0,0)--(0.8,-0.66);
    \draw [fill=lightgray] (0,0)--(0.8,-0.66)--(0.5,-0.866)--(0,0);
    \draw [fill=lightgray] (0.8,-0.66)--(0.5,-0.866)--(0.8,-1.06)--(0.8,-0.66);
    \draw [fill=lightgray] (0.5,-0.866)--(0.8,-1.06)--(1,-1.73)--(0.5, -0.866);

    \draw [fill=lightgray] (6,0.2)--(6.2,0.67)--(5.8,0.67)--(6,0.2);
    \draw [fill=lightgray] (6,0.2)--(6.2,0.67)--(6.5,0.866)--(6,0.2);
    \draw [fill=lightgray] (6.2,0.67)--(6.5,0.866)--(6.2, 1.07)--(6.2,0.67);
    \draw [fill=lightgray] (6.5,0.866)--(6.2, 1.07)--(7,1.73)--(6.5,0.866);
    \draw [fill=lightgray] (6.2, 1.07)--(7,1.73)--(6,1.73)--(6.2,1.07);
    \draw [fill=lightgray] (6,1.73)--(6.2,1.07)--(5.8,1.07)--(6,1.73);
    \draw [fill=lightgray] (5.8,1.07)--(6,1.73)--(5,1.73)--(5.8,1.07);
    \draw [fill=lightgray] (5,1.73)--(5.8,1.07)--(5.5,0.866)--(5,1.73);
    \draw [fill=lightgray] (5.8,1.07)--(5.5,0.866)--(5.8,0.67)--(5.8,1.07);
    \draw [fill=lightgray] (5.5,0.866)--(5.8,0.67)--(6,0.2)--(5.5, 0.866);
    \draw [fill=lightgray] (6,0.2)--(6.5,0.866)--(6,-0.2)--(6,0.2);
    \draw [fill=lightgray] (6,-0.2)--(6.5, 0.866)--(6.48, 0.5)--(6,-0.2);
    \draw [fill=lightgray] (6,-0.2)--(6.48,0.5)--(6.68,0.16)--(6,-0.2);
    \draw [fill=lightgray] (6,-0.2)--(6.68, 0.16)--(7,0)--(6,-0.2);
    \draw [fill=lightgray] (6.68, 0.16)--(7,0)--(7.02, 0.36)--(6.68, 0.16);
    \draw [fill=lightgray] (7,0)--(7.02, 0.36)--(8,0)--(7,0);
    \draw [fill=lightgray] (7.02, 0.36)--(8,0)--(7.5,0.866)--(7.02,0.36);
    \draw [fill=lightgray] (7.5,0.866)--(7.02,0.36)--(6.82,0.7)--(7.5,0.866);
    \draw [fill=lightgray] (6.82,0.7)--(7.5,0.866)--(7,1.73)--(6.82,0.7);
    \draw [fill=lightgray] (7,1.73)--(6.82,0.7)--(6.5,0.866)--(7,1.73);
    \draw [fill=lightgray] (6.82,0.7)--(6.5,0.866)--(6.48, 0.5)--(6.82,0.7);
    \draw [fill=lightgray] (4,0)--(4.68, 0.16)--(4.48, 0.5)--(4,0);
    \draw [fill=lightgray] (4,0)--(4.68, 0.16)--(5,0)--(4,0);
    \draw [fill=lightgray] (4.68, 0.16)--(5,0)--(5.02, 0.36)--(4.68, 0.16);
    \draw [fill=lightgray] (5,0)--(5.02, 0.36)--(6,0.2)--(5,0);
    \draw [fill=lightgray] (5.02, 0.36)--(6,0.2)--(5.5,0.866)--(5.02,0.36);
    \draw [fill=lightgray] (5.5,0.866)--(5.02,0.36)--(4.82,0.7)--(5.5,0.866);
    \draw [fill=lightgray] (4.82,0.7)--(5.5,0.866)--(5,1.73)--(4.82,0.7);
    \draw [fill=lightgray] (5,1.73)--(4.82,0.7)--(4.5,0.866)--(5,1.73);
    \draw [fill=lightgray] (4.82,0.7)--(4.5,0.866)--(4.48, 0.5)--(4.82,0.7);
    \draw [fill=lightgray] (4.5,0.866)--(4.48, 0.5)--(4,0)--(4.5, 0.866);
    \draw [fill=lightgray] (5,-1.73)--(5.2,-1.06)--(4.8,-1.06)--(5,-1.73);
    \draw [fill=lightgray] (5,-1.73)--(5.2,-1.06)--(5.5,-0.866)--(5,-1.73);
    \draw [fill=lightgray] (5.2,-1.06)--(5.5,-0.866)--(5.2, -0.66)--(5.2,-1.06);
    \draw [fill=lightgray] (5.5,-0.866)--(5.2, -0.66)--(6,0.2)--(5.5,-0.866);
    \draw [fill=lightgray] (5.2, -0.66)--(6,0.2)--(5,0)--(5.2,-0.66);
    \draw [fill=lightgray] (5,0)--(5.2,-0.66)--(4.8,-0.66)--(5,0);
    \draw [fill=lightgray] (4.8,-0.66)--(5,0)--(4,0)--(4.8,-0.66);
    \draw [fill=lightgray] (4,0)--(4.8,-0.66)--(4.5,-0.866)--(4,0);
    \draw [fill=lightgray] (4.8,-0.66)--(4.5,-0.866)--(4.8,-1.06)--(4.8,-0.66);
    \draw [fill=lightgray] (4.5,-0.866)--(4.8,-1.06)--(5,-1.73)--(4.5, -0.866);
    \draw [fill=lightgray] (5,-1.73)--(5.68, -1.57)--(5.48, -1.23)--(5,-1.73);
    \draw [fill=lightgray] (5,-1.73)--(5.68, -1.57)--(6,-1.73)--(5,-1.73);
    \draw [fill=lightgray] (5.68, -1.57)--(6,-1.73)--(6.02, -1.37)--(5.68, -1.57);
    \draw [fill=lightgray] (6,-1.73)--(6.02, -1.37)--(7,-1.73)--(6,-1.73);
    \draw [fill=lightgray] (6.02, -1.37)--(7,-1.73)--(6.5,-0.866)--(6.02,-1.37);
    \draw [fill=lightgray] (6.5,-0.866)--(6.02,-1.37)--(5.82,-1.03)--(6.5,-0.866);
    \draw [fill=lightgray] (5.82,-1.03)--(6.5,-0.866)--(6,-0.2)--(5.82,-1.03);
    \draw [fill=lightgray] (6,-0.2)--(5.82,-1.03)--(5.5,-0.866)--(6,-0.2);
    \draw [fill=lightgray] (5.82,-1.03)--(5.5,-0.866)--(5.48, -1.23)--(5.82,-1.03);
    \draw [fill=lightgray] (5.5,-0.866)--(5.48, -1.23)--(5,-1.73)--(5.5, -0.866);
    \draw [fill=lightgray] (5.5,-0.866)--(6,-0.2)--(6,0.2)--(5.5,-0.866);
    \draw [fill=lightgray] (7,-1.73)--(7.2,-1.06)--(6.8,-1.06)--(7,-1.73);
    \draw [fill=lightgray] (7,-1.73)--(7.2,-1.06)--(7.5,-0.866)--(7,-1.73);
    \draw [fill=lightgray] (7.2,-1.06)--(7.5,-0.866)--(7.2, -0.66)--(7.2,-1.06);
    \draw [fill=lightgray] (7.5,-0.866)--(7.2, -0.66)--(8,0)--(7.5,-0.866);
    \draw [fill=lightgray] (7.2, -0.66)--(8,0)--(7,0)--(7.2,-0.66);
    \draw [fill=lightgray] (7,0)--(7.2,-0.66)--(6.8,-0.66)--(7,0);
    \draw [fill=lightgray] (6.8,-0.66)--(7,0)--(6,-0.2)--(6.8,-0.66);
    \draw [fill=lightgray] (6,-0.2)--(6.8,-0.66)--(6.5,-0.866)--(6,-0.2);
    \draw [fill=lightgray] (6.8,-0.66)--(6.5,-0.866)--(6.8,-1.06)--(6.8,-0.66);
    \draw [fill=lightgray] (6.5,-0.866)--(6.8,-1.06)--(7,-1.73)--(6.5, -0.866);

    \fill (0,0) circle (1pt) node[below] {$\scriptstyle w_7$};
    \fill (1,0) circle (1pt); 
    \fill (2,0) circle (1pt) node[right] {$\scriptstyle w_8$};
    \fill (1,1.73) circle (1pt) node[above right] {$\scriptstyle w_{11}$};
    \fill (0.5, 0.866) circle (1pt); 
    \fill (-1,0) circle (1pt); 
    \fill (-0.5, 0.866) circle (1pt); 
    \fill (-2,0) circle (1pt) node[left] {$\scriptstyle w_6$};
    \fill (-1,1.73) circle (1pt) node[above left] {$\scriptstyle w_{10}$};
    \fill (1,-1.73) circle (1pt) node[below right] {$\scriptstyle w_{4}$};
    \fill (0.5, -0.866) circle (1pt); 
    \fill (-0.5,-0.866) circle (1pt); 
    \fill (-1,-1.73) circle (1pt) node[below left] {$\scriptstyle w_{3}$};
    \fill (0, 1.73) circle (1pt) node[above] {$\scriptstyle w'_{10,11}$};
    \fill (1.5, 0.866) circle (1pt) node[right] {$\scriptstyle w'_{8,11}$};
    \fill (-1.5, 0.866) circle (1pt) node[left] {$\scriptstyle w'_{6,10}$};
    \fill (0, -1.73) circle (1pt) node[below] {$\scriptstyle w'_{3,4}$};
    \fill (1.5, -0.866) circle (1pt) node[right] {$\scriptstyle w'_{4,8}$};
    \fill (-1.5, -0.866) circle (1pt) node[left] {$\scriptstyle w'_{3,6}$};
    \fill (-0.2,1.07) circle (0.7pt);
    \fill (-0.2,0.67) circle (0.7pt);
    \fill (0.2,1.07) circle (0.7pt);
    \fill (0.2,0.67) circle (0.7pt);
    
    \fill (0.48,0.5) circle (0.7pt);
    \fill (0.82,0.7) circle (0.7pt);
    \fill (1.02,0.36) circle (0.7pt);
    \fill (0.68,0.16) circle (0.7pt);
    
    \fill (-1.52,0.5) circle (0.7pt);
    \fill (-1.18,0.7) circle (0.7pt);
    \fill (-0.98,0.36) circle (0.7pt);
    \fill (-1.32,0.16) circle (0.7pt);
    
    \fill (-1.2,-0.66) circle (0.7pt);
    \fill (-1.2,-1.06) circle (0.7pt);
    \fill (-0.8,-0.66) circle (0.7pt);
    \fill (-0.8,-1.06) circle (0.7pt);
    
    \fill (-0.52,-1.23) circle (0.7pt);
    \fill (-0.18,-1.03) circle (0.7pt);
    \fill (0.02,-1.37) circle (0.7pt);
    \fill (-0.32,-1.57) circle (0.7pt);
    
    \fill (0.8,-0.66) circle (0.7pt);
    \fill (0.8,-1.06) circle (0.7pt);
    \fill (1.2,-0.66) circle (0.7pt);
    \fill (1.2,-1.06) circle (0.7pt);
    
    \fill (6,0.2) circle (1pt) node[above left] {$\scriptstyle w_{7_1}$};
    \fill (6,-0.2) circle (1pt) node[right] {$\scriptstyle w_{7_2}$};
    \fill (7,0) circle (1pt); 
    \fill (8,0) circle (1pt) node[right] {$\scriptstyle w_8$};
    \fill (7,1.73) circle (1pt) node[above right] {$\scriptstyle w_{11}$};
    \fill (6.5, 0.866) circle (1pt); 
    \fill (5,0) circle (1pt); 
    \fill (5.5, 0.866) circle (1pt); 
    \fill (4,0) circle (1pt) node[left] {$\scriptstyle w_6$};
    \fill (5,1.73) circle (1pt) node[above left] {$\scriptstyle w_{10}$};
    \fill (7,-1.73) circle (1pt) node[below right] {$\scriptstyle w_{4}$};
    \fill (6.5, -0.866) circle (1pt); 
    \fill (5.5,-0.866) circle (1pt); 
    \fill (5,-1.73) circle (1pt) node[below left] {$\scriptstyle w_{3}$};
    \fill (6, 1.73) circle (1pt) node[above] {$\scriptstyle w'_{10,11}$};
    \fill (7.5, 0.866) circle (1pt) node[right] {$\scriptstyle w'_{8,11}$};
    \fill (4.5, 0.866) circle (1pt) node[left] {$\scriptstyle w'_{6,10}$};
    \fill (6, -1.73) circle (1pt) node[below] {$\scriptstyle w'_{3,4}$};
    \fill (7.5, -0.866) circle (1pt) node[right] {$\scriptstyle w'_{4,8}$};
    \fill (4.5, -0.866) circle (1pt) node[left] {$\scriptstyle w'_{3,6}$};
    \fill (5.8,1.07) circle (0.7pt);
    \fill (5.8,0.67) circle (0.7pt);
    \fill (6.2,1.07) circle (0.7pt);
    \fill (6.2,0.67) circle (0.7pt);
    
    \fill (6.48,0.5) circle (0.7pt);
    \fill (6.82,0.7) circle (0.7pt);
    \fill (7.02,0.36) circle (0.7pt);
    \fill (6.68,0.16) circle (0.7pt);
    
    \fill (4.48,0.5) circle (0.7pt);
    \fill (4.82,0.7) circle (0.7pt);
    \fill (5.02,0.36) circle (0.7pt);
    \fill (4.68,0.16) circle (0.7pt);
    
    \fill (4.8,-0.66) circle (0.7pt);
    \fill (4.8,-1.06) circle (0.7pt);
    \fill (5.2,-0.66) circle (0.7pt);
    \fill (5.2,-1.06) circle (0.7pt);
    
    \fill (5.48,-1.23) circle (0.7pt);
    \fill (5.82,-1.03) circle (0.7pt);
    \fill (6.02,-1.37) circle (0.7pt);
    \fill (5.68,-1.57) circle (0.7pt);
    
    \fill (6.8,-0.66) circle (0.7pt);
    \fill (6.8,-1.06) circle (0.7pt);
    \fill (7.2,-0.66) circle (0.7pt);
    \fill (7.2,-1.06) circle (0.7pt);
    
    \draw[-stealth] (2.5,0)--(3.5,0);
    
\end{tikzpicture}
\end{center}
\caption{Replacing a degree 14 vertex in $\td{T_4}$ when $k=20$.}
\label{fig:Replacing_vertex}
\end{figure}

After repeating the above process for each degree 14 vertex in $\widetilde{T_i}$, we take the clique complex and call the resulting flag complex $Y_2$. Observe that this process increases the number of vertices by 1, the number of edges by 3, and the number of faces by 2 each time a degree 14 vertex in $\widetilde{T_i}$ is replaced. Also, note that $\maxdeg(Y_2)\leq 12$ for all $m$.

Now, we give the $w_j$, $w'_{s,t},$ and $u_\ell$ vertices their natural orderings and say that $w'_{s,t} > w_j$ and $w'_{s,t}>u_\ell$ for all $\ell, s,t,$ and $j$, and then let these vertex orderings induce orientations on the edges and faces of $Y_2$ (as shown in Figure \ref{fig:triangulations}). Counting the vertices, edges, and faces of $Y_2$ we have that if $0\leq k\leq 12$, then there were no degree 14 vertices to remove, so $|V(Y_2)|=6k+2\delta +2$, $|E(Y_2)|=17k+6\delta$, and $|F(Y_2)|=10k+4\delta$. If $k\geq 13$, then $i\geq 3$ and at least one degree 14 vertex was removed to construct $Y_2$ from $\widetilde{T_i}$.  Table \ref{tab:Y2counts} gives the number of vertices, edges, and faces of $Y_2$ for all values of $k\geq 13$.

\begin{table}[ht]
\begin{center}
    \def\arraystretch{1.5}
    \setlength\tabcolsep{10pt}
    \begin{tabular}{|c|c|c|c|c|}
    \hline
    $k$ & $\delta$ & $|V(Y_2)|$ & $|E(Y_2)|$ & $|F(Y_2)|$ \\
    \hline
    $4i+4$ & 0 & $\frac{13}{2}k-4$ & $\frac{37}{2}k-18$ & $11k-12$ \\
    \hline
    $4i+3$ & 1 & $\frac{13}{2}k-\frac{3}{2}$ & $\frac{37}{2}k-\frac{21}{2}$ & $11k-7$ \\
    \hline
    $4i+2$ & 2 & $\frac{13}{2}k$ & $\frac{37}{2}k-6$ & $11k-4$ \\
    \hline
    $4i+1$ & 3 & $\frac{13}{2}k+\frac{5}{2}$ & $\frac{37}{2}k+\frac{3}{2}$ & $11k+1$ \\
    \hline
    \end{tabular}
\end{center}
\caption{Number of vertices, edges, and faces in $Y_2$ when $k\geq 13$.}
\label{tab:Y2counts}
\end{table}

\subsubsection{Homology of $Y_2$}  Since $Y_2$ is an oriented flag triangulation of $S^2$ with $k$ square holes, each of which are vertex disjoint and nonadjacent, our $Y_2$ is homeomorphic to Newman's $Y_2$ in  the $d=2$ case of \cite[Lemma 5.7]{newman}, and we can apply the same argument
to compute the homology of $Y_2$. 
We denote the 1-cycles that are the boundaries of the $k$ square holes by $\tau_1,\dots, \tau_k$. Explicitly, for $j=1, \dots, k$, we define \[\tau_{j}: = [u_{4j-4}, u_{4j-3}] + [u_{4j-3}, u_{4j-2}] + [u_{4j-2}, u_{4j-1}] - [u_{4j-4}, u_{4j-1}].\] Then, by our construction, each $\tau_j$ is a positively-oriented 1-cycle in $H_1(Y_2)$, and exactly as in \cite[Proof of Lemma 5.7]{newman}, we have that
$
    H_1(Y_2) = \langle \tau_1, \ldots, \tau_k \vert \tau_1 + \cdots + \tau_k = 0 \rangle.
$
%
\subsection{Construction of $X$ and proof of Theorem~\ref{thm:Xm}}
Now we attach $Y_1$ and $Y_2$ together to form the two-dimensional flag complex $X$ such that the torsion subgroup of $H_1(X)$ is isomorphic to $\mathbb{Z}/m\mathbb{Z}$. This part essentially follows~\cite[\S3]{newman}, though we must confirm that the resulting complex is flag and satisfies the desired bound of vertex degree.

\begin{proof}[Proof of Theorem~\ref{thm:Xm}]
For a given $m$, let $Y_1$ and $Y_2$ be the complexes constructed in the previous subsections.  Let $S$ denote the subcomplex of $Y_2$ induced by the $4k$ vertices $u_0,\dots, u_{4k-1}$. Since the square holes in $Y_2$ are vertex-disjoint and have no edges between any two of them, $S$ is a disjoint union of $k$ square boundaries. Let $f: S \rightarrow Y_1$ be the simplicial map defined, for $j=1, \ldots, k$, by
\begin{align*}
     & u_{4j-4} \mapsto v_{4n_j}, & & u_{4j-3} \mapsto v_{4n_j + 1}, & u_{4j-2} \mapsto v_{4n_j + 2}, && u_{4j-1} \mapsto v_{4n_j + 3}.
\end{align*}

Following \cite[\S3]{newman}, let $X = Y_1 \sqcup_f Y_2$ and observe that this is a simplicial complex by the same argument as Newman gives. In addition, $X$ is a flag complex because $Y_1$ and $Y_2$ are flag, and we subdivided the edges of $Y_1$ and $Y_2$ to avoid the possibility that $X$ might contain a 3-cycle which doesn't have a face. Furthermore, in $X$ the squares $\tau_j$ and $\gamma_{n_j}$ are identified by $f$ for $j=1, \ldots, k$, and, as in \cite{newman},
\begin{equation*}
    H_1(X) \cong \mathbb{Z}^{k-1} \oplus \mathbb{Z}/m \mathbb{Z},
\end{equation*}
where $\Z/m\Z$ has the repeated squares representation given by
$$\la \gamma_0,\gamma_1,\dots,\gamma_{n_k}\; |\; 2\gamma_0=\gamma_1, 2\gamma_1=\gamma_2,\dots, 2\gamma_{n_k-1}=\gamma_{n_k}, \gamma_{n_1}+\cdots+\gamma_{n_k}=0 \ra.$$
Finally, using our counts for the number of vertices, edges, and faces of $Y_1$ and $Y_2$ and with $\delta$ defined as above,
if $0\leq k\leq 12$, we have
$$|V(X)|=2k+12n_k+6+2\delta, \; |E(X)|= 13k+40n_k+4+6\delta, \text{ and }|F(X)|=10k+28n_k+4\delta.$$ If $k\geq 13$, then Table \ref{tab:Xcounts} gives the number of vertices, edges, and faces in $X$ (where $i=\lfloor \frac{k-1}{4} \rfloor$). 

\begin{table}[ht]
\begin{center}
    \def\arraystretch{1.5}
    \setlength\tabcolsep{10pt}
    \begin{tabular}{|c|c|c|c|c|}
    \hline
    $k$ & $\delta$ & $|V(X)|$ & $|E(X)|$ & $|F(X)|$ \\
    \hline
    $4i+4$ & 0 & $\frac{5}{2}k+12n_k$ & $\frac{29}{2}k+40n_k-14$ & $11k+28n_k-12$ \\
    \hline
    $4i+3$ & 1 & $\frac{5}{2}k+12n_k+\frac{5}{2}$ & $\frac{29}{2}k+40n_k-\frac{13}{2}$ & $11k+28n_k-7$ \\
    \hline
    $4i+2$ & 2 & $\frac{5}{2}k+12n_k+4$ & $\frac{29}{2}k+40n_k-2$ & $11k+28n_k-4$ \\
    \hline
    $4i+1$ & 3 & $\frac{5}{2}k+12n_k+\frac{13}{2}$ & $\frac{29}{2}k+40n_k+\frac{11}{2}$ & $11k+28n_k+1$ \\
    \hline
    \end{tabular}
\end{center}
\caption{Number of vertices, edges, and faces in $X$ when $k\geq 13$.}
\label{tab:Xcounts}
\end{table}
Additionally, recall that $\maxdeg(Y_1)\leq9$ and $\maxdeg(Y_2)\leq 12$. Since in $X$ we are only identifying the squares of $Y_2$ with $k$ of the squares of $Y_1$, to find the maximum degree of any vertex of $X$, we need only check the degrees of the identified vertices. In $Y_1$, we know that $\deg(v_j)\leq 9$ for each $j$, and in $Y_2$, we know that $\deg(u_\ell) \in \{4,5\}$ for each $\ell$. Let $v_j$ and $u_\ell$ be vertices that are identified in $X$. Since two of their adjacent edges in the squares are identified as well, in $X$ we see that $\deg(v_j)=\deg(u_\ell) \leq 12$. Thus, $\maxdeg(X)\leq 12$.
\end{proof}

We also note the following corollary:
\begin{corollary}
\label{cor:flag-newman}
    For every finite abelian group $G$ there is a two-dimensional flag complex $X$ such that the torsion subgroup of $H_1(X)$ is isomorphic to $G$ and $\maxdeg(X)\leq 12$.
\end{corollary}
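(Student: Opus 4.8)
The plan is to reduce immediately to Theorem~\ref{thm:Xm} via the structure theorem for finite abelian groups. Write $G \cong \Z/m_1\Z \oplus \cdots \oplus \Z/m_t\Z$ for integers $m_i \geq 2$ (for instance, take the invariant factor decomposition and discard any trivial factors; if $G$ is trivial, set $t = 0$). For each $i$, let $X_{m_i}$ be the two-dimensional flag complex produced by Theorem~\ref{thm:Xm}, so that the torsion subgroup of $H_1(X_{m_i})$ is $\Z/m_i\Z$ and $\maxdeg(X_{m_i}) \leq 12$; recall moreover from the proof of that theorem that $H_1(X_{m_i}) \cong \Z^{k_i - 1} \oplus \Z/m_i\Z$ for some $k_i$.

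Now take $X$ to be the disjoint union $X := X_{m_1} \sqcup \cdots \sqcup X_{m_t}$ (and $X$ a single point when $t = 0$). I would then verify three things. First, $X$ is a two-dimensional flag complex: the clique complex of a disjoint union of graphs is the disjoint union of their clique complexes, since any clique of a disjoint union of graphs lies entirely within one connected component (two vertices in different components are not adjacent). Second, $\maxdeg(X) = \max_i \maxdeg(X_{m_i}) \leq 12$, as adjoining a new connected component does not change the degree of any existing vertex. Third, $H_1(X) \cong \bigoplus_{i=1}^t H_1(X_{m_i}) \cong \Z^{\left(\sum_i k_i\right) - t} \oplus G$, so the torsion subgroup of $H_1(X)$ is isomorphic to $G$, as desired.

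There is essentially no obstacle here; the only point requiring mild care is that one should \emph{not} instead try to glue the $X_{m_i}$ together at a common vertex in order to obtain a connected example, since identifying two vertices adds their degrees and could violate the bound $\maxdeg \leq 12$. Taking a disjoint union sidesteps this issue and is permitted by the statement of the corollary.
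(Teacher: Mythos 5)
Your proof is correct and matches the paper's argument: both decompose $G$ into cyclic factors, invoke Theorem~\ref{thm:Xm} for each factor, and take the disjoint union of the resulting complexes. The additional verifications you supply (that disjoint unions preserve the flag property and the degree bound, and that $H_1$ is additive over disjoint unions) are left implicit in the paper but are exactly the right checks, and your closing remark about why a wedge or vertex-identification would be the wrong move is a sensible observation.
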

\begin{proof}
Let $G = \Z/m_1\Z \oplus \Z/m_2\Z \oplus \cdots \oplus \Z/m_r \Z$ with $m_1|m_2|\cdots|m_r$ be an arbitrary finite abelian group. By Theorem \ref{thm:Xm}, there exist two-dimensional flag complexes $X_{m_i}$ such that the torsion subgroup of $H_1(X_{m_i})$ is isomorphic to $\Z/m_i\Z$ and $\maxdeg(X_{m_i})\leq 12$. If $X$ is the disjoint union of all the $X_{m_i}$, then $X$ satisfies the hypotheses of the corollary.
\end{proof}

\section{Appearance of subcomplexes in $\Delta(n,p)$}\label{sec:subgraphs}
The goal of this section is to show that, for attaching probabilities $p$ in an appropriate range, the flag complex $X_m$ from Theorem~\ref{thm:Xm} will appear with high probability as an induced subcomplex of $\Delta(n,p)$.  See \S\ref{sec:background} for the relevant definitions and notation used throughout this section.  Here is our main result:

\begin{proposition}\label{prop:high-probability}
Let $m\geq 2$, and let $X_m$ be as in Theorem~\ref{thm:Xm}. If $\Delta\sim \Delta(n,p)$ is a random flag complex with $n^{-1/6}\ll p\leq 1-\epsilon$ for some $\epsilon>0$, then $\P\left[X_m \overset{ind}{\subset} \Delta(n,p)\right]\rightarrow 1$ as $n \to \infty$.
\end{proposition}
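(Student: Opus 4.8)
The plan is to reduce to a purely graph-theoretic statement about induced subgraphs of $G(n,p)$ and then apply a minor variant of Bollob\'as's subgraph theorem.

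\textbf{Reduction to graphs.} Let $G_m$ be the $1$-skeleton of $X_m$. Since $\Delta(n,p)$ is the clique complex of $G(n,p)$, and since for every vertex subset $\alpha$ the induced subcomplex $\Delta(n,p)|_\alpha$ is again flag — indeed it is precisely the clique complex of the induced subgraph $G(n,p)|_\alpha$ — and since $X_m$, being flag, equals the clique complex of $G_m$, we obtain
\[
X_m\overset{ind}{\subset}\Delta(n,p)\qquad\Longleftrightarrow\qquad G_m\overset{ind}{\subset}G(n,p).
\]
So it suffices to show that $G(n,p)$ contains an induced copy of $G_m$ with high probability.

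\textbf{Bounding the essential density.} By Theorem~\ref{thm:Xm} we have $\maxdeg(X_m)\le 12$, hence $\maxdeg(G_m)\le 12$. Then for any $H\subset G_m$ with $|V(H)|>0$,
\[
|E(H)|\le\tfrac12\sum_{v\in V(H)}\deg_{G_m}(v)\le 6\,|V(H)|,
\]
so $m(G_m)\le 6$. Since $p\gg n^{-1/6}$ and $1/m(G_m)\ge 1/6$, we get
\[
\frac{p}{n^{-1/m(G_m)}}=\frac{p}{n^{-1/6}}\cdot n^{\,1/m(G_m)-1/6}\longrightarrow\infty,
\]
because the first factor tends to infinity and the second is at least $1$; that is, $p$ lies strictly above the Bollob\'as threshold $n^{-1/m(G_m)}$ for $G_m$. (One does not need $G_m$ to be strictly balanced here, since only the ``above threshold'' direction is used.)

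\textbf{Applying the subgraph theorem.} By Bollob\'as's theorem \cite[Theorem~8]{Bollobas-subgraph}, $G(n,p)$ contains a copy of $G_m$ with high probability. To promote this to an induced copy I would use the variant that also invokes the bound $p\le 1-\epsilon$, via a second moment argument on the random variable $Z$ counting induced copies of $G_m$ in $G(n,p)$. Writing $v=|V(G_m)|$ and $e=|E(G_m)|$, one has $\E[Z]=\Theta\big(n^v p^e(1-p)^{\binom{v}{2}-e}\big)$; since $v$ is constant and $1-p\ge\epsilon$, the non-edge factor is at least $\epsilon^{\binom{v}{2}}$, a positive constant, while $e/v\le m(G_m)\le 6$ together with $p<1$ gives $n^v p^e=(np^{e/v})^v\ge(np^6)^v\to\infty$; hence $\E[Z]\to\infty$. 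For the variance, summing over ordered pairs of $v$-subsets meeting in $j$ vertices, the $j=0$ term contributes $(1+o(1))\E[Z]^2$, and for each $j\ge 1$ the number of such pairs is $O(n^{2v-j})$ while the probability that both subsets induce $G_m$ is at most $p^{2e-e_j}$, where $e_j$ denotes the largest number of edges of $G_m$ spanned by $j$ of its vertices; since $e_j\le m(G_m)\,j\le 6j$, each such term is $O\big(\E[Z]^2\,(np^6)^{-j}\big)=o(\E[Z]^2)$. Therefore $\E[Z^2]=(1+o(1))\E[Z]^2$, and Chebyshev's inequality gives $\P[Z>0]\to 1$.

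\textbf{Main obstacle.} Essentially all of the topology of $X_m$ is irrelevant in this argument except for the single inequality $\maxdeg(X_m)\le 12$, which is exactly why that bound was built into Theorem~\ref{thm:Xm}. The real work is the bookkeeping in the induced second moment computation: confirming that demanding the \emph{absence} of the $\binom{v}{2}-e$ extra pairs of vertices does not move the threshold — it does not, because each such pair is absent with probability at least $\epsilon$, contributing only bounded multiplicative constants — and handling the borderline case in which some subgraph of $G_m$ has edge density exactly $6$, where the relevant expectation grows only like a power of $np^6$ rather than a power of $n$. The hypothesis $p\le 1-\epsilon$ is what keeps us out of the dual regime (near $p=1$), where one would instead need the complement of $G_m$ to appear.
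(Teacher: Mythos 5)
Your proposal is correct and follows essentially the same route as the paper: reduce to the graph-level statement $G_m\overset{ind}{\subset}G(n,p)$ by observing both complexes are clique complexes, bound the essential density $m(G_m)\le \tfrac12\maxdeg(X_m)\le 6$, and then run a second-moment argument (using $p\le 1-\epsilon$ to control the non-edge factors) to upgrade Bollob\'as's subgraph theorem to an induced-subgraph version. The paper simply packages the induced version as its own standalone proposition (Proposition~\ref{prop:induced-bollobas}), whose proof is the same second-moment computation you carry out inline, written via the conditional-probability bound $\P[\mathbf{1}_{H'}=1\mid\mathbf{1}_H=1]\le\P[\mathbf{1}_{H'}=1]\,p^{-j\,m(G_m)}(1-p)^{-\binom{j}{2}}$ rather than your ``drop the non-edges and absorb the bounded $(1-p)$ factors'' bookkeeping; these are equivalent given the standing hypothesis $1-p\ge\epsilon$.
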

Our proof of this result will rely on Bollob\'{a}s's theorem on the appearance of subgraphs of a random graph, which we state here for reference.
\begin{theorem}[Bollob\'{a}s \cite{Bollobas-subgraph}]
  \label{thm:Bollob\'{a}s}
  Let $G'$ be a fixed graph, let $m(G')$ be the essential density of $G'$ defined in Definition~\ref{mG}, and let $G(n,p)$ be the Erd\H{o}s-R\'enyi random graph on $n$ vertices with attaching probability $p$. As $n \to \infty$, we have 
    \[\P\left[G'\subset G(n,p)\right]\rightarrow \begin{cases}
    0 & \text{if } p\ll n^{-1/m(G')}\\
    1 & \text{if } p\gg n^{-1/m(G')}
    \end{cases}.\]
\end{theorem}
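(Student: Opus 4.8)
The statement to prove is Bollob\'as's theorem on the appearance of a fixed graph $G'$ as a (not necessarily induced) subgraph of $G(n,p)$, with threshold $n^{-1/m(G')}$. The plan is to establish the two cases separately: the $0$-statement by a first moment bound, and the $1$-statement by a second moment argument applied to a well-chosen copy of $G'$ exploiting the extremal subgraph that realizes $m(G')$.

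\textbf{The $0$-statement.} Suppose $p\ll n^{-1/m(G')}$, and let $H\subseteq G'$ be a subgraph with $|E(H)|/|V(H)| = m(G')$. Let $N_H$ denote the number of copies of $H$ in $G(n,p)$. Since a copy of $G'$ contains a copy of $H$, it suffices to show $\P[N_H\geq 1]\to 0$. By linearity, $\E[N_H] \leq n^{|V(H)|} p^{|E(H)|}$, which equals $\left(n\, p^{m(G')}\right)^{|V(H)|}\to 0$ because $p\ll n^{-1/m(G')}$ means $n p^{m(G')}\to 0$. Markov's inequality then gives $\P[N_H\geq 1]\leq \E[N_H]\to 0$, hence $\P[G'\subset G(n,p)]\to 0$.

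\textbf{The $1$-statement.} Suppose $p\gg n^{-1/m(G')}$. It is a standard reduction that it suffices to prove the result when $G'$ is strictly balanced, i.e.\ $m(H)<m(G')$ for every proper subgraph $H\subsetneq G'$: indeed, pick a subgraph $H_0\subseteq G'$ achieving $m(G')$ and having minimal number of vertices among such; one checks $H_0$ is strictly balanced (a proper subgraph of $H_0$ with the same density would contradict minimality, and one of strictly larger density would contradict $m(H_0)=m(G')$), and a copy of $H_0$ appearing w.h.p.\ is not quite enough — so instead one argues that it is enough to find copies of the strictly balanced $G'$ itself; if $G'$ is not strictly balanced one replaces the target by its densest strictly balanced subgraph and then extends, but the cleanest route is: assume WLOG $G'$ strictly balanced, count copies of $G'$, and apply the second moment method. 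Let $N = N_{G'}$ count the (labeled, or unlabeled — constants don't matter) copies of $G'$ in $G(n,p)$. Then $\E[N] \asymp n^{|V(G')|} p^{|E(G')|} = \left(n\, p^{m(G')}\right)^{|V(G')|}\to\infty$ since $n p^{m(G')}\to\infty$. By Chebyshev's inequality, $\P[N=0]\leq \Var(N)/\E[N]^2$, so it suffices to show $\Var(N) = o(\E[N]^2)$, equivalently $\E[N^2] = (1+o(1))\E[N]^2$. Expanding $\E[N^2] = \sum_{A,B} \P[A\cup B \subseteq G(n,p)]$ over ordered pairs of potential copies $A,B$ of $G'$, the diagonal-ish terms where $A$ and $B$ share no edges contribute $(1+o(1))\E[N]^2$. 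For the remaining terms, group by the isomorphism type of the overlap $A\cap B$, which is a subgraph $F$ of $G'$ with $|V(F)|=a\geq 1$ and $|E(F)|=b$ (with $b\geq 1$ for the "bad" terms). The number of such ordered pairs is $O\!\left(n^{2|V(G')|-a}\right)$, and each contributes $p^{2|E(G')|-b}$, so this block of the sum is $O\!\left(\E[N]^2 \cdot n^{-a} p^{-b}\right)$. Strict balancedness gives $b/a < m(G')$ for proper $F$ (and $b=|E(G')|$, $a=|V(G')|$ only when $F=G'$, i.e.\ $A=B$, a negligible diagonal contribution of order $\E[N]$), whence $n^{-a}p^{-b} = \left(n\, p^{b/a}\right)^{-a} \ll \left(n\, p^{m(G')}\right)^{-a}\to 0$. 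Summing over the finitely many isomorphism types $F$ gives $\E[N^2] - \E[N]^2 = o(\E[N]^2)$, so $\P[N=0]\to 0$ and $\P[G'\subset G(n,p)]\to 1$.

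\textbf{Main obstacle.} The delicate point is the strictly-balanced reduction in the $1$-statement: one must argue carefully that appearance of $G'$ (rather than merely its densest strictly balanced subgraph) is governed by $m(G')$, and that the second moment computation genuinely needs strict balancedness to kill every off-diagonal overlap type. The bookkeeping of which overlap types $F$ can occur as subgraphs of two copies of $G'$, and verifying $\E[N]\to\infty$ only in the regime $p\gg n^{-1/m(G')}$ (not merely $p = \omega(n^{-1/m(G')})$ up to logs), is where the care is needed; everything else is routine first/second moment estimation.
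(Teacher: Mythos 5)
This theorem is not proved in the paper: it is quoted from Bollob\'as with a citation, so there is no in-paper proof to compare against line by line. The closest thing is the paper's proof of Proposition~\ref{prop:induced-bollobas}, the induced-subgraph analogue, which runs the same first/second moment method you use. Your $0$-statement (first moment on a densest subgraph $H$ plus Markov) is correct and standard.

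The one genuine problem is your ``reduction to the strictly balanced case'' in the $1$-statement. As written it is not a reduction at all: you observe, correctly, that finding the densest strictly balanced subgraph $H_0$ w.h.p.\ does not yield a copy of $G'$, and then you simply declare ``assume WLOG $G'$ strictly balanced'' without supplying the missing extension argument. If strict balancedness were actually needed downstream, this would be a real gap. Fortunately it is not needed, and the cleanest fix is to delete the reduction entirely. First, $\E[N]\to\infty$ for arbitrary $G'$: with $v=|V(G')|$, $e=|E(G')|$ one has $e/v\leq m(G')$, hence $p^{e/v}\geq p^{m(G')}$ (as $p\leq 1$), so $n^{v}p^{e}=\bigl(np^{e/v}\bigr)^{v}\geq\bigl(np^{m(G')}\bigr)^{v}\to\infty$. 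Second, in the variance estimate you only need the \emph{non-strict} inequality $b/a\leq m(G')$, which holds for every subgraph $F\subseteq G'$ with $a\geq 1$ vertices and $b$ edges directly from Definition~\ref{mG}; it already gives $n^{-a}p^{-b}=\bigl(np^{b/a}\bigr)^{-a}\leq\bigl(np^{m(G')}\bigr)^{-a}\to 0$ for each overlap type, including the diagonal $F=G'$ (whose contribution is $O(1/\E[N])\to 0$). This is exactly how the paper handles the induced version: its proof of Proposition~\ref{prop:induced-bollobas} invokes only the fact that any subgraph of $G'$ on $i$ vertices has at most $i\cdot m(G')$ edges, with no balancedness hypothesis. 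Strict balancedness is relevant for finer statements (e.g.\ the Poisson limit at the threshold), not for the crude threshold you are proving. With the detour removed, your second moment computation is correct and matches the standard argument the paper cites.
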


Since any flag complex is determined by its underlying graph, we can almost apply this to prove Proposition~\ref{prop:high-probability}.  However, Proposition~\ref{prop:high-probability} (and our eventual application of it via Hochster's formula to Theorem~\ref{thm:m torsion}) requires $X_m$ to appear as an induced subcomplex, whereas Bollob\'{a}s's result is for not necessarily induced subgraphs.  The following proposition, which is likely known to experts, shows that so long as $p$ is bounded away from $1$, this distinction is immaterial in the limit.

\begin{proposition}
\label{prop:induced-bollobas}
Let $G'$ be a fixed graph, let $m(G')$ be the essential density of $G'$ defined in Definition~\ref{mG}, and let $G(n,p)$ be the Erd\H{o}s-R\'enyi random graph on $n$ vertices with attaching probability $p$. Suppose $p = p(n)\leq 1-\epsilon$ for some $\epsilon>0$. Then as $n \to \infty$, we have
        \[\P\left[G'\overset{ind}{\subset} G(n,p)\right]\rightarrow \begin{cases}
        0 & \text{if } p\ll n^{-1/m(G')}\\
        1 & \text{if } p\gg n^{-1/m(G')}
    \end{cases}.\]
\end{proposition}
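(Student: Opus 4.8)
The case $p \ll n^{-1/m(G')}$ is immediate: an induced copy of $G'$ is in particular a (not necessarily induced) copy of $G'$, so $\P[G' \overset{ind}{\subset} G(n,p)] \leq \P[G' \subset G(n,p)] \to 0$ by Theorem~\ref{thm:Bollob\'{a}s}. The substance is the case $p \gg n^{-1/m(G')}$, where one must upgrade the non-induced copy guaranteed by Bollob\'{a}s to an induced one. The plan is to use a two-scale deletion/extension argument: find \emph{many} copies of $G'$ inside a large random graph, then argue that at least one of them is induced.

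\textbf{Key steps.} First, fix $r = |V(G')|$ and choose a slowly growing function $N = N(n)$ with $N \to \infty$ but $N$ small enough that $p \gg N^{-1/m(G')}$ still holds (e.g. $N = \lfloor \log n \rfloor$, or any function tending to infinity slowly enough; this is possible precisely because $p \gg n^{-1/m(G')}$ gives us room). Partition (or select from) the $n$ vertices of $G(n,p)$ into roughly $n/N$ disjoint blocks of size $N$, each inducing an independent copy of $G(N,p)$. Second, within a single block $G(N,p)$: a first-moment/second-moment computation (as in the proof of Bollob\'{a}s's theorem) shows that the number $Z$ of copies of $G'$ in $G(N,p)$ satisfies $\E[Z] \to \infty$ and $\Var(Z) = o(\E[Z]^2)$, so by Chebyshev $Z \geq \tfrac12 \E[Z]$ with probability bounded below (in fact $\to 1$). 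Third — the key point — show that with positive probability (bounded away from $0$, using $p \leq 1-\epsilon$), at least one of these copies is induced. Here one estimates the expected number of \emph{non-induced} copies, i.e. copies of $G'$ that sit inside a subgraph with at least one extra edge: for each copy of $G'$ on a given vertex set, the probability it fails to be induced is at most $1 - (1-p)^{\binom{r}{2} - |E(G')|} \leq 1 - \epsilon^{\binom{r}{2}}$, which is bounded away from $1$. A cleaner route: condition on the vertex set $W$ of size $r$; the probability that $G(N,p)|_W$ is a copy of $G'$ as an \emph{induced} subgraph is $p^{|E(G')|}(1-p)^{\binom r2 - |E(G')|}$, which is at least $\epsilon^{\binom r2}$ times the probability of being a non-induced copy; combining this ratio with $\E[Z]\to\infty$ shows the expected number of induced copies in a block tends to infinity, and a second-moment argument again gives that a block contains an induced copy with probability bounded below by some constant $c > 0$. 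Fourth, since the $\sim n/N$ blocks are independent and each contains an induced $G'$ with probability $\geq c$, the probability that \emph{no} block does is at most $(1-c)^{n/N} \to 0$, and an induced copy in any block is an induced copy in $G(n,p)$.

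\textbf{Main obstacle.} The delicate step is the third one: controlling the interaction between ``being a copy of $G'$'' and ``being induced,'' and in particular making the second-moment argument work for induced copies rather than for all copies. The clustering of induced copies is governed by the same essential-density bookkeeping as in Bollob\'{a}s's original argument — overlapping partial copies are the dominant contribution to the variance, and one needs $m(G')$ to be the relevant exponent for subgraph counts to be negligible relative to the square of the mean — but one must additionally carry the $(1-p)^{\,\cdot\,}$ factors through, which is exactly where $p \leq 1-\epsilon$ is used: it keeps these factors bounded below by a positive constant depending only on $G'$ and $\epsilon$, so they never affect the asymptotic order. An alternative, possibly cleaner, write-up avoids re-deriving the second moment by quoting Bollob\'{a}s directly inside each block to get one (non-induced) copy with high probability, then running an independent deletion argument across blocks; but some version of the induced-vs-non-induced comparison is unavoidable, and that comparison — elementary but requiring care with constants — is the heart of the proof.
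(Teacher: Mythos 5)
Your block-decomposition approach differs genuinely from the paper's proof, which applies the second-moment method directly to the count of induced copies of $G'$ in the full graph $G(n,p)$, carrying the $(1-p)^{\binom{v}{2}-e}$ and $(1-p)^{-\binom{i}{2}}$ factors through the variance estimate and using $p \leq 1-\epsilon$ to bound them by constants depending only on $|V(G')|$ and $\epsilon$. You correctly identify the two essential ideas — the second-moment method adapted to induced copies, and the role of $p\leq 1-\epsilon$ in keeping the $(1-p)^{\,\cdot\,}$ factors harmless — and these are indeed the heart of the paper's argument.

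However, the plan as written has a concrete error that would make it fail. You want $N$ ``small enough that $p\gg N^{-1/m(G')}$ still holds,'' and suggest $N=\lfloor\log n\rfloor$, but the monotonicity is backwards: $N^{-1/m(G')}$ \emph{increases} as $N$ decreases, so shrinking $N$ makes $p\gg N^{-1/m(G')}$ \emph{harder} to satisfy, not easier. Concretely, if $p$ is barely supercritical, say $p=n^{-1/m(G')}\log\log n$, then $p\ll(\log n)^{-1/m(G')}$, so blocks of size $\log n$ are subcritical; with high probability each block contains no copy of $G'$ at all, your constant $c$ is not bounded away from $0$, and the $(1-c)^{n/N}$ bound is vacuous. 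What you actually need is $N p^{m(G')}\to\infty$ together with $n/N\to\infty$; these are simultaneously achievable (e.g. $N=\bigl(n\,p^{-m(G')}\bigr)^{1/2}$) precisely because $p\gg n^{-1/m(G')}$, but such an $N$ is large, not slowly growing. Once $N$ is chosen correctly, the within-block probability of an induced copy already tends to $1$ by the very second-moment computation you would have to carry out anyway, so the block decomposition and the independence boost become redundant; it is cleaner to run that computation on $G(n,p)$ itself, as the paper does.
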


\begin{proof}
Since an induced subgraph is a subgraph, if $\P[G'\subset G(n,p)]\rightarrow 0$, then\\ $\P\left[G'\overset{ind}{\subset} G(n,p)\right]\rightarrow 0$. Thus, the first half of the threshold is a direct consequence of Theorem~\ref{thm:Bollob\'{a}s}, and all that needs to be shown is the second half of the threshold. 

Suppose that $p\gg n^{-1/m(G')}$.  We will mirror the proof of Bollob\`{a}s's theorem from \cite[Theorem~5.3]{frieze-book} (originally due to \cite{ruc-vince}), which relies on the second moment method. Let $\Lambda(G',n)$ be the set containing all of the possible ways that $G'$ can appear as a induced subgraph of $G(n,p)$.
Thus, an element $H\in \Lambda(G',n)$ corresponds to a subset of the $n$ vertices and specified edges among those vertices such that the resulting graph is a copy of $G'$.  We want to count the number of times $G'$ appears as an induced subgraph of $G(n,p)$.  For each $H\in \Lambda(G',n)$, we let $\mathbf{1}_{H}$ be the corresponding indicator random variable, where $\mathbf{1}_H = 1$ occurs in the event that restricting $G(n,p)$ to the vertices of $H$ is precisely the copy of $G'$ indicated by $H$. 
Note that the random variables $\mathbf{1}_{H}$ are not independent, as two distinct elements from $\Lambda(G',n)$ might have overlapping vertex sets.
If we let $N_{G'}$ be the random variable for the number of copies of $G'$ appearing as induced subgraphs in $G(n,p)$, then we have $N_{G'} = \ds\sum_{H \in \Lambda(G',n)} \mathbf{1}_{H}.$

Our goal is to show that $\P[N_{G'}\geq 1]\to 1$, or equivalently that $\P[N_{G'}= 0]\to 0$. Since $N_{G'}$ is non-negative, the second moment method as seen in \cite[Theorem 4.3.1]{alon-spencer-book}
states that $\P[N_{G'}= 0]\leq \frac{\Var(N_{G'})}{\E[N_{G'}]^2}$, so it suffices to show that $\frac{\Var(N_{G'})}{\E[N_{G'}]^2}\rightarrow 0$. To start, we will bound the expected value.  To simplify notation throughout the following computation, we let $v=|V(G')|$ and $e=|E(G')|$ denote the number of vertices and edges of $G'$.
\begin{align*}
    \E[N_{G'}]&= \sum_{H\in \Lambda(G',n)} \E[\mathbf{1}_H]\\
    &= \sum_{H\in \Lambda(G',n)} p^{e}(1-p)^{\binom{v}{2}-e}\\
    &= \Omega(n^{v})\cdot p^{e}(1-p)^{\binom{v}{2}-e}.
\end{align*}

Now let us repeat this with the variance instead. 
\begin{align*}
    \Var(N_{G'}) &= \sum_{H,H'\in \Lambda(G',n)} \E[\mathbf{1}_{H}\mathbf{1}_{H'}] - \E[\mathbf{1}_{H}]\E[\mathbf{1}_{H'}]\\
    &= \sum_{H,H'\in \Lambda(G',n)} \P[\mathbf{1}_{H}=1\text{ and }\mathbf{1}_{H'}=1] - \P[\mathbf{1}_{H}=1]\P[\mathbf{1}_{H'}=1]\\
    &= \sum_{H,H'\in \Lambda(G',n)} \P[\mathbf{1}_{H}=1]\left(\P[\mathbf{1}_{H'}=1 \mid \mathbf{1}_{H}=1]- \P[\mathbf{1}_{H'}=1]\right)\\
    &= p^{e}(1-p)^{\binom{v}{2}-e} \sum_{H,H'\in \Lambda(G',n)}\P[\mathbf{1}_{H'}=1 \mid \mathbf{1}_{H}=1]- \P[\mathbf{1}_{H'}=1]\\
    \intertext{If $H$ and $H'$ don't share at least two vertices, $\mathbf{1}_H$ and $\mathbf{1}_{H'}$ are independent of each other.  So we can restrict to the case where they share at least two vertices, which gives}
    &= p^{e}(1-p)^{\binom{v}{2}-e} \sum_{i=2}^{v}\sum_{\substack{H,H'\in \Lambda(G',n) \\ |V(H)\cap V(H')|=i}}\P[\mathbf{1}_{H'}=1 \mid \mathbf{1}_{H}=1]- \P[\mathbf{1}_{H'}=1].
\end{align*}
We now come to the key observation, which is also at the heart of the proof in \cite[Theorem~5.3]{frieze-book}: $\P[\mathbf{1}_{H'}=1 \mid \mathbf{1}_{H}=1]$ is maximized if those edges and non-edges in $H$ are exactly those that are required by $H'$. Thus, by applying the fact that any subgraph of $G'$ with $i$ vertices, has at most $i\cdot m(G')$ edges and at most $\binom{i}{2}$ non-edges we get the following bound for $H,H'\in \Lambda(G',n)$ sharing $i$ vertices:
    \[\P[\mathbf{1}_{H'}=1 \mid \mathbf{1}_{H}=1]\leq \P[\mathbf{1}_{H'}=1]\cdot p^{-i\cdot m(G')}(1-p)^{-\binom{i}{2}}\]
From here, it is a standard computation.  Substituting this back into the previous equation and simplifying, we get
    \begin{align*} 
    \Var(N_{G'}) &\leq p^{e}(1-p)^{\binom{v}{2}-e} \sum_{i=2}^{v}\sum_{\substack{H,H'\in \Lambda(G',n) \\ |V(H)\cap V(H')|=i}}\P[\mathbf{1}_{H'}=1]\left(p^{-i\cdot m(G')}(1-p)^{-\binom{i}{2}}-1\right)\\
    &\leq \left(p^{e}(1-p)^{\binom{v}{2}-e}\right)^2 \sum_{i=2}^{v} O\left(n^{2v-i}\right)\left(p^{-i\cdot m(G')}(1-p)^{-\binom{i}{2}}-1\right).\\
    \intertext{And since $p$ is bounded away from $1$ and $1-p$ is bounded away from $0$, we get}
    &\leq \left(p^{e}(1-p)^{\binom{v}{2}-e}\right)^2 \sum_{i=2}^{v} O\left(n^{2v-i}p^{-i\cdot m(G')}\right).
    \end{align*}
Finally, applying the second moment method gives 
\[
\P[N_{G'}= 0]\leq \frac{\Var(N_{G'})}{\E[N_{G'}]^2}=\frac{\ds\sum_{i=2}^{v} O\left(n^{2v-i}p^{-i\cdot m(G')}\right)}{\Omega(n^{2v})} =\sum_{i=2}^{v} O\left(n^{-i}p^{-i\cdot m(G')}\right).
\]
Since $p\gg n^{-1/m(G')}$, we conclude that $np^{m(G')}\rightarrow \infty$, and therefore, $\P[N_{G'}=0 ]\rightarrow 0$. It follows that $\P\left[G'\overset{ind}{\subset} G(n,p)\right]\rightarrow 1$.
\end{proof}

We now turn to the proof of Proposition~\ref{prop:high-probability}.

\begin{proof}[Proof of Proposition~\ref{prop:high-probability}]
Recall that $X_m$ is the complex from Theorem~\ref{thm:Xm}, and let $H_m$ be its underlying graph. Moreover, the 
underlying graph of $\Delta(n,p)$ is the Erd\H{o}s-R\'enyi random graph $G(n,p)$.  Since a flag complex is uniquely determined by its underlying graph, it suffices to show that $\P\left[H_m\overset{ind}{\subset}G(n,p)\right]\rightarrow 1$.

Since $\maxdeg(H_m)\leq 12$, every subgraph has average degree at most $12$. Thus, the essential density $m(H_m)$ satisfies $m(H_m)\leq 6$. Since $p\gg n^{-1/6}$, we have $p\gg n^{-1/m(H_m)}$. Applying Proposition~\ref{prop:induced-bollobas} gives $\P\left[H_m \overset{ind}{\subset} G(n,p)\right]\rightarrow 1$; thus, $\P\left[X_m\overset{ind}{\subset}\Delta(n,p)\right]\rightarrow 1$. 
\end{proof}

\begin{remark}\label{rmk:sharpness}
Explicitly computing the essential density $m(H_m)$ seems difficult in general, and our chosen bound $m(H_m)\leq 6$, which is determined by the fact that $6 = \frac{1}{2}\maxdeg(X_m)$, is likely too coarse.
It would be interesting to see a sharper result on $m(H_m)$, as this could potentially provide a heuristic for decreasing the bound on $r$ in Conjecture~\ref{conj:dependence}.  Might it even be the case that $m(H_m)$ is half the average degree, $\frac{1}{2}\avg(H_m)$?

In any case, $\frac{1}{2}\avg(H_m)$ at least provides a lower bound on $m(H_m)$.  Due to the detailed nature of the constructions in \S\ref{sec:construction}, we can estimate this value.  Let $k\geq 13$ and $m\gg 0$ so that $n_k=\lfloor \log_2(m)\rfloor$ will be much larger than $\delta$.  By Table~\ref{tab:Xcounts}, the number of vertices will be approximately $\frac{5}{2}k+12n_k$ and the number of edges will be approximately $\frac{29}{2}k+40n_k$.  The smallest the ratio of edges to vertices can be is when $n_k\gg k$, in which case the ratio will be approximately $3\frac{1}{3}$.  A similar computation holds for $k\leq 12$ and for $m\gg 0$.  We can conclude that $m(H_m)\geq 3 \frac{1}{3}-\epsilon$, where $\epsilon$ is a positive constant that goes to $0$ as $m\to \infty$. \qed
\end{remark}

\section{A detailed analysis of 2-torsion}\label{sec:2torsion}
The goal of this section is to provide a more detailed analysis of what happens in the case of 2-torsion (when $m=2$ in Proposition \ref{prop:high-probability}). In \cite{costa-farber-horak}, Costa, Farber, and Horak analyze the $2$-torsion of the fundamental group of $\Delta(n,p)$. Their results, specifically Theorem 7.2, give that if $n^{-11/30} \ll p \ll n^{-1/3-\epsilon}$ where $0 < \epsilon < \frac{1}{30}$ is fixed, then $H_1(\Delta(n,p))$ has $2$-torsion with high probability as $n \to \infty$. Since our aim is to show that there is $2$-torsion with high probability in the homology of an induced subcomplex of $\Delta(n,p)$, rather than in the global homology, we are able to extend their threshold to $n^{-11/30} \ll p \leq 1-\epsilon$ where $\epsilon >0$. We use the same techniques as in \S\ref{sec:subgraphs}, but instead of using $X_2$ from Theorem \ref{thm:Xm}, we use a known flag triangulation of $\R P^2$ that minimizes the number of vertices and where we can easily compute its essential density. This gives the less restrictive threshold of $p\gg n^{-11/30}$ in the $2$-torsion case as opposed to $p\gg n^{-1/6}$ in the general case. In \cite[Figure 1]{bibby2019minimal}, the authors found two (nonisomorphic) minimal flag triangulations of $\R P^2$, each of which have 11 vertices and 30 edges and differ by a single bistellar 0-move; one of these is used in \cite{costa-farber-horak}, and the other, which we use in this section, is depicted in Figure \ref{fig:flagRP2}.

\begin{figure}
\begin{center}
\begin{tikzpicture}[scale=1.5]
    \draw [fill=lightgray, thick] (0,2)--(0.85,1.3)--(0,1)--(0,2);
    \draw [fill=lightgray, thick] (0,2)--(-0.85,1.3)--(0,1)--(0,2);
    \draw [fill=lightgray, thick] (0,1)--(0.85,1.3)--(0.8,0.3)--(0,1);
    \draw [fill=lightgray, thick] (0,1)--(-0.85,1.3)--(-0.8,0.3)--(0,1);
    \draw [fill=lightgray, thick] (0,1)--(0.8,0.3)--(0,0)--(0,1);
    \draw [fill=lightgray, thick] (0,1)--(-0.8,0.3)--(0,0)--(0,1);
    \draw [fill=lightgray, thick] (0.85,1.3)--(1.7,0.6)--(0.8,0.3)--(0.85,1.3);
    \draw [fill=lightgray, thick] (0.85,1.3)--(1.7,0.6)--(0.8,0.3)--(0.85,1.3);
    \draw [fill=lightgray, thick] (-0.85,1.3)--(-1.7,0.6)--(-0.8,0.3)--(-0.85,1.3);
    \draw [fill=lightgray, thick] (1.7,0.6)--(0.8,0.3)--(1.4,-0.5)--(1.7,0.6);
    \draw [fill=lightgray, thick] (-1.7,0.6)--(-0.8,0.3)--(-1.4,-0.5)--(-1.7,0.6);
    \draw [fill=lightgray, thick] (0.8,0.3)--(1.4,-0.5)--(0.5,-0.75)--(0.8,0.3);
    \draw [fill=lightgray, thick] (0.8,0.3)--(0,0)--(0.5,-0.75)--(0.8,0.3);
    \draw [fill=lightgray, thick] (-0.8,0.3)--(-1.4,-0.5)--(-0.5,-0.75)--(-0.8,0.3);
    \draw [fill=lightgray, thick] (-0.8,0.3)--(0,0)--(-0.5,-0.75)--(-0.8,0.3);
    \draw [fill=lightgray, thick] (0,0)--(-0.5,-0.75)--(0.5,-0.75)--(0,0);
    \draw [fill=lightgray, thick] (1.4,-0.5)--(1,-1.6)--(0.5,-0.75)--(1.4,-0.5);
    \draw [fill=lightgray, thick] (-1.4,-0.5)--(-1,-1.6)--(-0.5,-0.75)--(-1.4,-0.5);
    \draw [fill=lightgray, thick] (1,-1.6)--(0.5,-0.75)--(0,-1.6)--(1,-1.6);
    \draw [fill=lightgray, thick] (-1,-1.6)--(-0.5,-0.75)--(0,-1.6)--(-1,-1.6);
    \draw [fill=lightgray, thick] (0,-1.6)--(0.5,-0.75)--(-0.5,-0.75)--(0,-1.6);

    \fill (0,0) circle (2pt) node[above right] {$v_8$};
    \fill (0,1) circle (2pt) node[above right] {$v_9$};
    \fill (0,2) circle (2pt) node[above] {$v_3$};
    \fill (0.85,1.3) circle (2pt) node[above right] {$v_2$};
    \fill (1.7,0.6) circle (2pt) node[right] {$v_6$};
    \fill (-0.85,1.3) circle (2pt) node[above left] {$v_{10}$};
    \fill (-1.7,0.6) circle (2pt) node[left] {$v_5$};
    \fill (-1,-1.6) circle (2pt) node[below left] {$v_2$};
    \fill (-0.5,-0.75) circle (2pt) node[below left] {$v_7$};
    \fill (0.5,-0.75) circle (2pt) node[below right] {$v_4$};
    \fill (1,-1.6) circle (2pt) node[below right] {$v_{10}$};
    \fill (-1.4,-0.5) circle (2pt) node[below left] {$v_6$};
    \fill (1.4,-0.5) circle (2pt) node[below right] {$v_5$};
    \fill (0.8,0.3) circle (2pt) node[above right] {$v_1$};
    \fill (-0.8,0.3) circle (2pt) node[left] {$v_{11}$};
    \fill (0,-1.6) circle (2pt) node[below] {$v_3$};
\end{tikzpicture}
\end{center}
\caption{A minimal flag triangulation of $\R P^2$, denoted by $\Delta(G)$.}
\label{fig:flagRP2}
\end{figure}
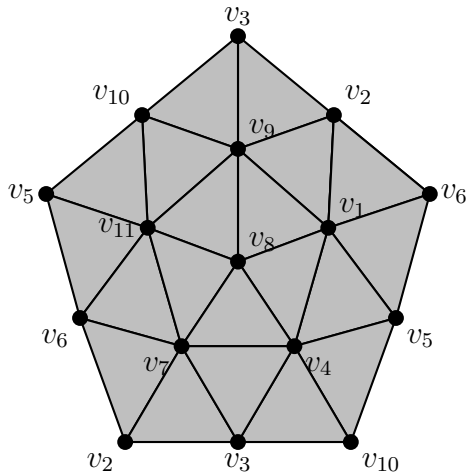

For the remainder of this section, let $G$ denote the underlying graph of this flag triangulation of $\R P^2$, which we denote by $\Delta(G)$.  To understand the probability that $\Delta(G)$ appears as an induced subcomplex of $\Delta(n,p)$, we need to compute the essential density $m(G)$.  
\begin{lemma}\label{lem:bollobas of G}
For the graph $G$ underlying the flag triangulation of $\R P^2$ exhibited in Figure~\ref{fig:flagRP2}, the essential density $m(G)$ is $30/11$.
\end{lemma}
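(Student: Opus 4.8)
The plan is to prove the two inequalities $m(G)\ge 30/11$ and $m(G)\le 30/11$ separately, the second one being where the work lies. For the lower bound, recall that $\Delta(G)$ is a triangulation of $\R P^2$ on the $11$ labelled vertices of Figure~\ref{fig:flagRP2}, so $|V(G)|=11$ and $|E(G)|=30$ (and, since $\chi(\R P^2)=1$, the Euler relation forces $|F(G)|=20$, consistent with $3|F(G)|=2|E(G)|$). Taking $H=G$ in Definition~\ref{mG} immediately gives $m(G)\ge |E(G)|/|V(G)|=30/11$.

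For the upper bound, let $H\subseteq G$ attain the maximum in Definition~\ref{mG}. Replacing $H$ by the subgraph of $G$ induced on $V(H)$ only adds edges, so I may assume $H$ is induced; set $j=|V(H)|$, and note $1\le j\le 11$. The key point is that $H$ is a simple graph embedded in $\R P^2$, being a subgraph of the $1$-skeleton of the triangulation $\Delta(G)$. I will invoke the standard surface edge bound: a simple graph with $v\ge 3$ vertices that embeds in $\R P^2$ has at most $3v-3$ edges. (One way to see this: argue one connected component at a time; a connected component with $v\ge 3$ vertices has a cellular embedding in some surface of Euler characteristic $\ge\chi(\R P^2)=1$, where each face is bounded by at least three edge-sides, so $3F\le 2E$, and then $V-E+F\ge 1$ gives $E\le 3V-3$; a component with at most two vertices trivially has $E\le V-1\le 3V-3$; summing over components yields the bound.) Thus $|E(H)|\le 3j-3$ when $j\ge 3$, while the cases $j\in\{1,2\}$ give $|E(H)|/j\le 1/2$. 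In every case,
\[
\frac{|E(H)|}{|V(H)|}\;\le\;\frac{3j-3}{j}\;=\;3-\frac{3}{j}\;\le\;3-\frac{3}{11}\;=\;\frac{30}{11},
\]
where the last inequality uses $j\le 11$. This establishes $m(G)\le 30/11$, and combining with the lower bound gives $m(G)=30/11$.

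I do not expect a real obstacle in this argument; the essential content is simply that, for a triangulation of $\R P^2$, the densest subgraph is the whole complex, which is exactly what the surface edge bound forces. The only mildly delicate points are the treatment of disconnected subgraphs and of subgraphs on one or two vertices, both dispatched above. (More generally, the same reasoning shows that any $n$-vertex triangulation of $\R P^2$ has essential density $3-3/n$, so the value $30/11$ is precisely what the count $|V(\Delta(G))|=11$ predicts.)
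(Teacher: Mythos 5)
Your proof is correct, but it takes a genuinely different route from the paper's. The paper's proof is an exhaustive computation: Table~\ref{tab:RP2 counts} records, for each $j\le 11$, the maximum number of edges of a subgraph of $G$ on $j$ vertices, and inspection shows the ratio is maximized by the full graph. You instead invoke the Euler-formula edge bound $|E(H)|\le 3|V(H)|-3$ for simple graphs embeddable in $\mathbb{R}P^2$, which immediately forces $|E(H)|/|V(H)|\le 3-3/j\le 30/11$ for every nonempty $H\subset G$. Your argument is cleaner and strictly more general: as you note, it shows that any $n$-vertex triangulation of $\mathbb{R}P^2$ has essential density $3-3/n$, and in particular it establishes for free the strict balance of $G$ mentioned just after the lemma (every proper subgraph has strictly smaller ratio). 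The trade-off is that it relies on the surface edge bound and the (standard but not entirely trivial) fact that a connected graph embeddable in $\mathbb{R}P^2$ admits a cellular embedding in a surface of Euler characteristic at least $1$, whereas the paper's table-based check is purely finite and elementary. Your handling of the small and disconnected cases is careful and correct.
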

\begin{proof}
This amounts to an exhaustive computation, which is summarized in Table~\ref{tab:RP2 counts}.  In particular, Table~\ref{tab:RP2 counts} identifies the maximal number of edges that a subgraph $H\subset G$ on $|V(H)|$ vertices can have, for each $|V(H)|\leq 11$.  One can see from the table that $m(G)$ is maximized by the entire graph, and thus $m(G) = |E(G)|/|V(G)| = 30/11$. 
\end{proof}

Lemma~\ref{lem:bollobas of G} shows that the graph $G$ is strongly balanced in the sense of Definition~\ref{mG}.  While we expect the essential density of our complexes $X_m$ to be lower than the coarse bound of $\frac{1}{2}\maxdeg(X_m)$ (see Remark~\ref{rmk:sharpness}), we note that in the case of the graph $G$, this difference is not very large.  In fact, we have $\frac{1}{2}\maxdeg(G)=3$ and $m(G)=30/11\approx 2.72$.
\begin{table}
\begin{center}
\def\arraystretch{1.5}
    \begin{tabular}{|c|c|c|c|}
    \hline
    $|V(H)|$ & $\max\{|E(H)|\}$ & $V(H)$ & $\max\left\{\frac{|E(H)|}{|V(H)|}\right\}$\\
    \hline
    1 & 0 & $\{v_1\}$ & 0\\
    \hline
    2 & 1 & $\{v_1, v_2\}$ & $\frac12$\\
    \hline
    3 & 3 & $\{v_1, v_2, v_6\}$ & 1\\
    \hline
    4 & 5 & $\{v_1,v_2,v_5,v_6\}$ & $\frac54$\\
    \hline
    5 & 7 & $\{v_1,v_2,v_4,v_5,v_6\}$ & $\frac75$\\
    \hline
    6 & 10 & $\{v_1,v_4,v_7,v_8,v_9,v_{11}\}$ &  $\frac53$\\
    \hline
    7 & 13 & $\{v_1,v_2,v_4,v_7,v_8,v_9,v_{11}\}$ & $\frac{13}{7}$\\
    \hline
    8 & 17 & $\{v_1,v_2,v_4,v_6,v_7,v_8,v_9,v_{11}\}$ & $\frac{17}{8}$\\
    \hline
    9 & 21 & $\{v_1,v_2,v_3,v_4,v_6,v_7,v_8,v_9,v_{11}\}$ & $\frac{7}{3}$\\
    \hline
    10 & 25 & $\{v_1,v_2,v_3,v_4,v_5,v_6,v_7,v_8,v_9,v_{11}\}$ & $\frac{5}{2}$\\
    \hline
    11 & 30 & $\{v_1,\dots,v_{11}\}$ & $\frac{30}{11}$\\
    \hline
    \end{tabular}
\end{center}
\caption{With $G$ as the underlying graph of the complex in Figure~\ref{fig:flagRP2}, this table computes the maximal number of edges of subgraphs $H\subset G$ with varying number of vertices.}
\label{tab:RP2 counts}
\end{table}
Combining Lemma~\ref{lem:bollobas of G} and Theorem \ref{thm:Bollob\'as} we obtain an analogue of Proposition~\ref{prop:high-probability}.

\begin{proposition}\label{prop:high-prob 2tor}
If $\Delta\sim \Delta(n,p)$ is a random flag complex with $n^{-11/30}\ll p\leq 1-\epsilon$ for some $\epsilon>0$,  then  $\P\left[\Delta(G) \overset{ind}{\subset} \Delta(n,p)\right]\rightarrow 1$ as $n \to \infty$.
\end{proposition}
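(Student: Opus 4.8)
The plan is to follow the same strategy as the proof of Proposition~\ref{prop:high-probability}, with the graph $G$ underlying the flag triangulation of $\R P^2$ in Figure~\ref{fig:flagRP2} playing the role that $H_m$ played there. The crucial difference is that Lemma~\ref{lem:bollobas of G} supplies the \emph{exact} essential density $m(G)=30/11$, in place of the coarse bound $m(H_m)\leq 6$, and this is precisely what lets the threshold be relaxed from $n^{-1/6}$ to $n^{-11/30}$.

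First I would reduce the statement about simplicial complexes to a statement about graphs. Since a flag complex is uniquely determined by its underlying graph, and the induced subcomplex of a flag complex on a subset $\alpha$ of its vertices is again the clique complex of the induced subgraph on $\alpha$, we have $\Delta(G)\overset{ind}{\subset}\Delta(n,p)$ if and only if $G\overset{ind}{\subset}G(n,p)$, where $G(n,p)$ is the Erd\H{o}s--R\'enyi graph underlying $\Delta(n,p)$. Hence it suffices to show $\P\left[G\overset{ind}{\subset}G(n,p)\right]\to 1$ as $n\to\infty$.

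Next I would invoke Proposition~\ref{prop:induced-bollobas} (the induced version of Bollob\'as's theorem). By Lemma~\ref{lem:bollobas of G}, $m(G)=30/11$, so $n^{-1/m(G)}=n^{-11/30}$; the hypothesis $n^{-11/30}\ll p$ is therefore exactly the condition $p\gg n^{-1/m(G)}$ appearing in Proposition~\ref{prop:induced-bollobas}, and the hypothesis $p\leq 1-\epsilon$ is the other condition needed there (it is what converts Bollob\'as's subgraph threshold into an induced-subgraph threshold via the second-moment argument in that proposition). Applying Proposition~\ref{prop:induced-bollobas} gives $\P\left[G\overset{ind}{\subset}G(n,p)\right]\to 1$, and therefore $\P\left[\Delta(G)\overset{ind}{\subset}\Delta(n,p)\right]\to 1$.

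There is essentially no remaining obstacle once Lemma~\ref{lem:bollobas of G} is in hand: the only content of the argument is the translation between the flag complex and its $1$-skeleton and the bookkeeping that $30/11$ is the reciprocal of the exponent in the hypothesis. The genuine work — verifying that $G$ is strictly balanced and that $m(G)=30/11$, i.e. that no proper subgraph has edge-to-vertex ratio exceeding $30/11$ — has already been carried out in the exhaustive computation recorded in Table~\ref{tab:RP2 counts}, and that table is the one place where any real care is required.
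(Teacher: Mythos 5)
Your proposal is correct and matches the paper's intended argument: the paper states that the proof is nearly identical to that of Proposition~\ref{prop:high-probability}, i.e.\ reduce to the underlying graphs, then apply Proposition~\ref{prop:induced-bollobas} using the exact essential density $m(G)=30/11$ computed in Lemma~\ref{lem:bollobas of G}. You have also correctly identified that the induced version (Proposition~\ref{prop:induced-bollobas}, requiring $p\leq 1-\epsilon$) is what must be invoked, rather than Theorem~\ref{thm:Bollob\'{a}s} alone.
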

\begin{proof}
The proof is nearly identical to that of Proposition~\ref{prop:high-probability}, so we omit the details.  
\end{proof}

\begin{question}\label{q:2torsion threshold}
It would be interesting to know whether $p \gg n^{-11/30}$ is a sharp threshold for the appearance of 2-torsion in the homology of any induced subcomplex of $\Delta(n,p)$. While ~\cite[Theorem~7.1]{costa-farber-horak} shows that the global homology has no torsion if $p\ll n^{-11/30}$, it is possible that some induced subcomplex of $\Delta(n,p)$ has $2$-torsion. A closely related question is whether there exists a flag complex $X$ with 2-torsion homology and a smaller essential density than $30/11$.
\end{question}

\section{Torsion in the Betti tables associated to $\Delta$}\label{sec:Betti numbers}

We now prove Theorem~\ref{thm:m torsion}.  The hard work was done in the previous sections.
\begin{proof}[Proof of Theorem~\ref{thm:m torsion} (2)]
Assume $n^{-1/6}\ll p \leq 1-\epsilon$ and let $\Delta\sim \Delta(n,p)$.  Let $X_m$ be as constructed in the proof of Theorem~\ref{thm:Xm}.  By Proposition~\ref{prop:high-probability}, $\Delta$ contains $X_m$ as an induced subcomplex with high probability as $n \to \infty$. Since $H_1(X_m)$ has $m$-torsion, Hochster's formula (see Fact~\ref{fact:depend}) gives that the Betti table of the Stanley--Reisner ideal of $\Delta$ has $\ell$-torsion for every prime $\ell$ dividing $m$.
\end{proof}

We can also apply the more detailed study of $2$-torsion from \S\ref{sec:2torsion} to obtain a result on the appearance of $2$-torsion in the Betti tables of random flag complexes.  
\begin{proposition}\label{prop:2tors}
Let $\Delta\sim \Delta(n,p)$ be a random flag complex with $n^{-11/30}\ll p \leq 1-\epsilon$ for some $\epsilon>0$. With high probability as $n\to \infty$, the Betti table of the Stanley--Reisner ideal of $\Delta$ has $2$-torsion.  
\end{proposition}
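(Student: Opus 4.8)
The plan is simply to assemble results already established in the excerpt. First I would invoke Proposition~\ref{prop:high-prob 2tor}: under the hypothesis $n^{-11/30}\ll p\leq 1-\epsilon$, the flag triangulation $\Delta(G)$ of $\R P^2$ depicted in Figure~\ref{fig:flagRP2} appears as an induced subcomplex of $\Delta(n,p)$ with high probability as $n\to\infty$. This is the only probabilistic input, and it already packages the essential-density computation $m(G)=30/11$ from Lemma~\ref{lem:bollobas of G} (which is exactly what makes the threshold $n^{-11/30}$ appear) together with the induced-versus-arbitrary-subgraph comparison from Proposition~\ref{prop:induced-bollobas}.

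Next I would recall the topological fact that $H_1(\R P^2;\Z)\cong \Z/2\Z$, so in particular $H_1(\Delta(G);\Z)$ has $2$-torsion. Here the only thing to check is that $\Delta(G)$, being the flag complex from Figure~\ref{fig:flagRP2} (one of the minimal flag triangulations of $\R P^2$ from \cite{bibby2019minimal}), is genuinely homeomorphic to $\R P^2$; granting that, the homology computation is immediate.

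Finally I would apply Hochster's formula in the form of Fact~\ref{fact:depend}: writing $\Delta(G)=\Delta|_\alpha$ for the vertex set $\alpha$ on which the induced copy sits, the existence of an induced subcomplex of $\Delta$ with $2$-torsion in some homology group forces the Betti table of the Stanley--Reisner ideal $I_\Delta$ to have $2$-torsion. Combining the ``with high probability'' assertion of the first step with the deterministic implication of the last two steps yields the proposition.

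I do not expect a genuine obstacle here: all of the real work was done in \S\ref{sec:2torsion} and in the statement of Hochster's formula, and the argument is essentially a shorter, more concrete version of the proof of Theorem~\ref{thm:m torsion}(2) (using $\Delta(G)$ in place of $X_m$). The one point meriting a sentence of care is why the upper bound $p\leq 1-\epsilon$ is needed: without it one could not pass from an ordinary copy of the underlying graph $G$ to an \emph{induced} copy, and hence could not conclude that $\Delta(G)$ itself (rather than some complex with additional filled-in faces) occurs as an induced subcomplex — but this is precisely what Proposition~\ref{prop:induced-bollobas}, already invoked inside Proposition~\ref{prop:high-prob 2tor}, takes care of.
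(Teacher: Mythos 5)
Your proposal is correct and matches the paper's argument exactly: the paper proves Proposition~\ref{prop:2tors} by the same chain you describe, namely invoking Proposition~\ref{prop:high-prob 2tor} to find $\Delta(G)$ as an induced subcomplex with high probability, observing $H_1(\R P^2)\cong \Z/2\Z$, and concluding via Hochster's formula (Fact~\ref{fact:depend}), just as in the proof of Theorem~\ref{thm:m torsion}(2) with $\Delta(G)$ replacing $X_m$.
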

\begin{proof}
The proof is the same as the proof of Theorem~\ref{thm:m torsion}, but utilizing Proposition~\ref{prop:high-prob 2tor} in place of Proposition~\ref{prop:high-probability}. 
\end{proof}

As a generalization of Question~\ref{q:2torsion threshold}, it would be interesting to understand a precise threshold on the attaching probability $p$ such that the Betti table of the Stanley--Reisner ideal of $\Delta$ does not depend on the characteristic.  A related question is posed in Question~\ref{q:threshold}.

\begin{remark}\label{rmk:2nd row}
Our constructions are based entirely on torsion in the $H_1$-groups, and thus we obtain Betti tables where the entries in the second row of the Betti table (the row of entries of the form $\beta_{i,i+2}$) depend on the characteristic.  Since Newman's work also produces small simplicial complexes where the $H_i$-groups have torsion for any $i\geq 1$~\cite[Theorem~1]{newman}, one could likely apply the methods of \S\ref{sec:construction} to produce thresholds for where the other rows of the Betti table would depend on the characteristic, and it might be interesting to explore the resulting thresholds.
\end{remark}

\section{Further Questions}\label{sec:veronese}

In the this final section, we discuss some further questions about torsion for flag complexes and for the asymptotic syzygies of geometric examples.




\begin{question}
Can one find new examples of Veronese embeddings of $\PP^r$, or of any other reasonably simple variety (Grassmanian, toric variety, etc.), whose Betti tables depend on the characteristic?  For a given $\ell$, can one produce a specific example of a variety whose Betti table has $\ell$-torsion?
\end{question}
\noindent We find it especially surprising that there are no known examples of $2$-torsion for $d$-uple embeddings of $\PP^r$.
Focusing on the case of projective space, the following question is open:
\begin{question}
What is the minimal value of $r$ such that the Betti table of the $d$-uple embedding of $\PP^r$ depends on the characteristic for some $d$?  (It is known that $2\leq r\leq 6$.)
\end{question}
An analogous question, in the context of random monomial ideals, would be as follows:

\begin{question}\label{q:threshold}
Let $m\geq 2$.  For a random flag complex $\Delta\sim \Delta(n,p)$, what is the threshold on $p$ such that the Betti table of the Stanley--Reisner ideal of $\Delta$ has $m$-torsion with high probability as $n\to \infty$?
\end{question}
A closely related result is~\cite[Theorem~8.1]{costa-farber-horak}, which implies that for any given odd prime $\ell$, the Betti table of the Stanley--Reisner ideal of $\Delta$ (with high probability as $n\to \infty$) has no $\ell$-torsion when $p \ll n^{-1/3-\epsilon}$ where $\epsilon >0$ is fixed.  
\begin{remark}\label{rmk:r bound}
We know of two natural ways that one could improve the threshold for $p$ in Theorem~\ref{thm:m torsion}.  First, one could perform a more detailed study of the essential density $m(H_m)$, as that value is surely lower than our chosen bound $\frac{1}{2}\maxdeg(X_m)$.  Second, one could aim to produce flag complexes $X_m'$ with torsion homology (not necessarily in $H_1$) whose underlying graphs have a lower essential density than $H_m$.  Of course, following the heuristic discussed in the introduction, any such improvement of the threshold for $p$ in Theorem~\ref{thm:m torsion} would suggest a corresponding improvement of the bound on $r$ in Conjectures~\ref{conj:dependence} and \ref{conj:bad primes}.
\end{remark}
In a different direction, one might ask about how large $n$ needs to be before we expect to see that the Betti table associated to $\Delta$ has $\ell$-torsion.  

\begin{question}
Fix a prime $\ell$ and $\epsilon>0$.  Let $\Delta\sim\Delta(n,p)$ be a random flag complex with $n^{-1/6}\ll p \ll 1-\epsilon$.  For a constant $0<\delta<1$, approximately how large does $n$ need to be to guarantee that
\[
\P\left[\text{ Betti table associated to $\Delta$ has $\ell$-torsion }\right] \geq 1 - \delta?
\]
\end{question}
It would be interesting to even answer this question for $2$-torsion, where the thresholds from~\cite[Theorems~7.1 and 7.2]{costa-farber-horak} make the question seemingly quite tractable.
An analogous question for Veronese embeddings of projective space would be the following:

\begin{question}
Fix a prime $\ell$ and integer $r \geq 2$. Can one provide lower/upper bounds on the minimal value of $d$ such that the Betti table of the $d$-uple embedding of $\PP^r$ has $\ell$-torsion?
\end{question}
Of course, one could ask similar questions, replacing $\PP^r$ by other varieties.  We could also turn to even more quantitative questions related to Conjecture~\ref{conj:bad primes} as well.
\begin{question}
Fix a prime $\ell$ and an integer $r \geq 2$. Can one describe the set of $d\in \ZZ$ such that the Betti table of the $d$-uple embedding of $\PP^r$ has $\ell$-torsion? Can one bound or estimate the density of this set?
\end{question}

\bibliography{bib}{}
\bibliographystyle{plain}

\end{document}